% Version 17.09.2009 von Anton
% Version 25.06.2012 von Dominik
% Version 10.11.2015 von Dominik
% Version 11.11.2015 von Dominik & Anton
% Version 1.7.2015 von Andi
% Version 20.7.2016 von Dominik & Anton
% Version 30.1.2017 von Andi - Verwendung von apacite.sty: American Psychological Association (APA)
% Version 4.5.2017 von Anton (nach 2. Report)
% Version 5.5.2017 von Anton

% pdflatex

\documentclass[reqno,oneside,a4paper,11pt]{amsart}
\usepackage[body={145mm,225mm}]{geometry}
\usepackage{amsthm,framed,amssymb,amsmath,verbatim,wrapfig}
\usepackage[english]{babel}
\usepackage{color}
 \usepackage[pdftex]{graphicx}
\pagestyle{plain}

\usepackage[natbibapa,nodoi]{apacite}  % erforderlich bei Verwendung von apacite.sty

\newtheorem{trm}{Theorem}[section]

\newtheorem{lem}[trm]{Lemma}
\newtheorem{cor}[trm]{Corollary}
\theoremstyle{definition}
\newtheorem{de}[trm]{Definition}
\newtheorem{rem}[trm]{Remark}

\setlength{\marginparwidth}{25mm}
\let\oldmarginpar\marginpar\renewcommand\marginpar[1]{\-\oldmarginpar[\raggedleft\footnotesize #1]{\raggedright\footnotesize #1}}	%Randnoten kleiner

\newcommand{\la}{\langle}
\newcommand{\ra}{\rangle}
\newcommand{\rah}{\rangle_\H}
\newcommand{\ran}{\operatorname{ran}}

\renewcommand{\d}{\,\mathrm{d}}
\newcommand{\pd}{\partial}

\renewcommand{\i}{{\mathrm{i}}}
\renewcommand{\phi}{\varphi}
\renewcommand{\epsilon}{\varepsilon}
\renewcommand{\theta}{\vartheta}
\newcommand{\C}{\mathbb{C}}
\newcommand{\R}{\mathbb{R}}

\newcommand{\N}{\mathbb{N}}
\renewcommand{\H}{\mathcal{H}}

\renewcommand{\Im}{{\mathrm{Im}}\,}
\renewcommand{\Re}{{\mathrm{Re}}\,}

\numberwithin{equation}{section}

% \setlength{\parindent}{0pt}
%\allowdisplaybreaks[3]

\begin{document}
\title{Closed-loop Stability Analysis of a Gantry Crane with Heavy Chain and Payload}
\begin{abstract}
 In this paper, we analyze a systematically designed and easily tunable backstepping-based boundary control concept developed by \cite{TWK06} for a gantry crane with heavy chain and payload. The corresponding closed-loop system is formulated as an abstract evolution equation in an appropriate Hilbert space. Non-restrictive conditions for the controller coefficients are derived, under which the solutions are described by a $C_0$-semigroup of contractions, and are asymptotically stable. Moreover, by applying Huang's theorem we can finally even show that under these conditions the controller renders the closed-loop system exponentially stable.
\end{abstract}
% \author{Dominik St\"urzer \and Anton Arnold \and Andreas Kugi}
%
\date{\today}
\author[D. St\"urzer]{Dominik St\"urzer} \address{Institute for Analysis and
 Scientific Computing, Technische Universit\"at Wien, Wiedner
 Hauptstra\ss{}e 8-10, 1040 Vienna, Austria}
\email{dominik.stuerzer@gmail.com}

\author[A. Arnold]{Anton Arnold} \address{Institute for Analysis and
 Scientific Computing, Technische Universit\"at Wien, Wiedner
 Hauptstra\ss{}e 8-10, 1040 Vienna, Austria}
\email{anton.arnold@tuwien.ac.at}

\author[A. Kugi]{Andreas Kugi} \address{Automation and Control Institute, Technische Universit\"at Wien, Gusshausstra\ss{}e 27-29, 1040 Vienna, Austria}
\email{kugi@acin.tuwien.ac.at}

\maketitle
% \centerline{\bf D.~St\"urzer\footnote{Institut f\"ur Analysis und Scientific Computing, Technische Universit\"at Wien,
% Wiedner Hauptstr.\ 8, A-1040 Wien, Austria, e-mail: dominik.stuerzer@tuwien.ac.at},
% A.~Arnold\footnote{Institut f\"ur Analysis und Scientific Computing,
% Technische Universit\"at Wien,
% Wiedner Hauptstr.\ 8,
% A-1040 Wien, Austria,
% e-mail: anton.arnold@tuwien.ac.at}, A.~Kugi\footnote{Institut f\"ur Automatisierungs- und Regelungstechnik,
% Technische Universit\"at Wien,
% Gu\ss{}hausstr. 25-29,
% A-1040 Wien, Austria,
% e-mail: andreas.kugi@tuwien.ac.at}}
% %\maketitle
% \vspace{0.9em}
% {\small\centerline{\today}}

%\tableofcontents
\section{Introduction}\label{sec:1}

This paper deals with the rigorous stability analysis of a control concept presented by \cite{TWK06} applied to the infinite-dimensional model of a gantry crane with heavy chain and payload. The model consists of a cart of mass $m_c$, which moves horizontally along a rail, a heavy chain of length $L$ with mass per length $\rho$, attached to the cart\footnote{Note that here only a single chain is considered, unlike the pair of parallel chains as used by \cite{TWK06}. This change corresponds to the substitution $\rho\to\rho/2$.}, and a payload of point mass $m_p$ at its end. The chain is assumed to be inextensible and perfectly flexible. For the derivation of the equations of motion, it is further assumed that no friction occurs in the system. The force $F$ acting on the cart serves as the control input. The situation is sketched in Figure \ref{fig1} on the left-hand side.

\begin{figure}[ht]
\includegraphics[width=0.7\textwidth]{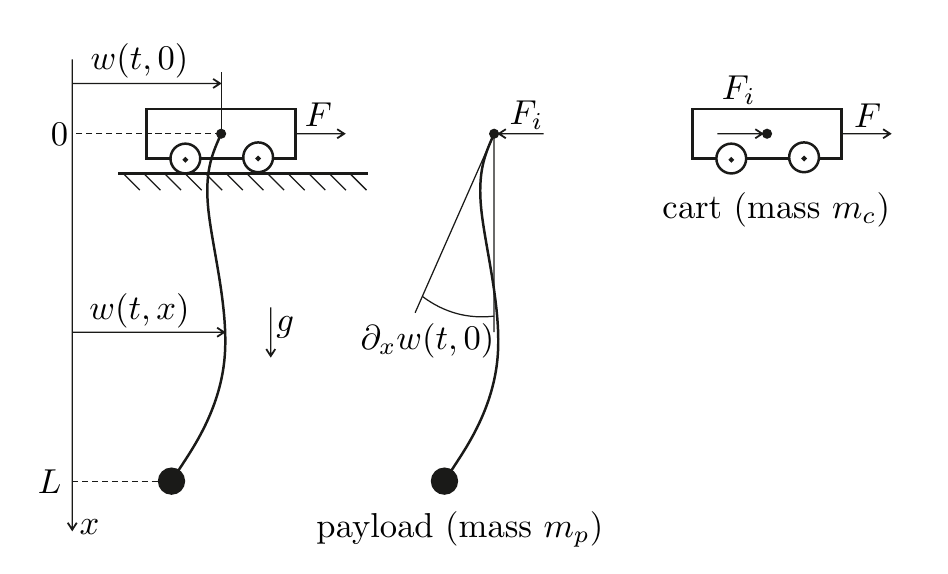}
\caption{Gantry crane with heavy chain and payload: Schematics (left) and representation of the internal force $F_i$ (right).}\label{fig1}
 \end{figure}

Let $w(t,x)$ denote the horizontal chain position. Then, under the assumption that the chain slopes $\pd_x w(t,x)$ remain sufficiently small for all $t>0$, the dynamics of the system are described by the following wave equation with dissipative, higher order boundary conditions %
\citep[see, e.g.,][]{TWK06,Petit:Rouchon}, \citep{Mifdal:1997}%
%see, e.g., \cite{TWK06}, \cite{Petit:Rouchon}, {\color{blue} \cite{Mifdal:1997}}%
\begin{subequations}\label{hamilton}
\begin{align}
	\rho\pd_t^2w(t,x)-\pd_x(P(x)\pd_x w(t,x))&=0,\label{sub:1}\\
	m_p\pd_t^2w(t,L)+P(L)\pd_xw(t,L)&=0,\label{sub:3}\\
	m_c\pd_t^2w(t,0)-P(0)\pd_xw(t,0)&=F(t).\label{sub:2}
\end{align}
\end{subequations}
The function $P(x)$ represents the tension in the chain at height $x$, given by $P(x)=g[\rho(L-x)+m_p]$, where $g$ denotes the gravitational acceleration. Note that $P\ge g\,m_p>0$ holds uniformly on $[0,L]$. In the following, it is only required that $P\in H^2(0,L)$ and that $P(x)\ge P^0>0$ holds uniformly on $[0,L]$ for some constant $P^0$. Thus, the density of the chain does not need to be constant, as it was the case for instance in \citep{TWK06}. Moreover, the following notation $v:=\pd_t w$ will be used in the sequel.

\medskip

For the system \eqref{hamilton}, many different control laws can be found in the literature %
\citep[see, e.g.,][]{Conrad:Mifdal,Mifdal:1997,Coron:Novel,TWK06},
{\color{black} with $v=w_t$.}
%\cite{Conrad:Mifdal}, \cite{Mifdal:1997}, \cite{Coron:Novel}, \cite{TWK06}.
Basically, the common structure of these controllers looks like
\begin{equation}\label{control_law_general}
F\left(  t\right)  =\vartheta_{1}v\left(  t,0\right)  +\vartheta_{2}\partial_{x}v\left(  t,0\right)  +\vartheta_{3}w\left(  t,0\right)+\vartheta_{4}\partial_{x}w\left(  t,0\right)  \text{, }
\end{equation}
but they differ in the fact which parameters $\vartheta_{j}$, $j=1,\ldots,4$ are equal to zero, which conditions have to be fulfilled to render the closed-loop system asymptotically (or even exponentially) stable, and how the controller parameters can be systematically tuned.

Thus, for instance \citep{Conrad:Mifdal} show that a (simple) passive controller with $\vartheta_{2}=\vartheta_{4}=0$ in (1.2) already ensures asymptotic stability of the closed-loop system. While this is a nice result from a theoretical point of view, this controller is not able to damp vibrations of the chain in case of stick-slip effects in the cart, which are always present in the real experiment. To make this clear, let us assume that the cart is in a sticking position, which also entails that $v\left(  t,0\right)  =0$ and $w\left(  t,0\right) = c$, with a constant $c$, then the cart will not move as long as the absolute value of the sum of the internal force in the pivot bearing carrying the chains $F_{i}=P\left(  0\right)  \partial_{x}w\left(t,0\right)$ and the input force $F(t)$, see \eqref{sub:2}, is smaller than the sticking friction. As a consequence the chain keeps on vibrating, but the cart stands still and the control law \eqref{control_law_general} with $\vartheta_{2}=\vartheta_{4}=0$ only
produces a constant input force $F\left(t\right)=\vartheta_{3}c$.

In \citep{Mifdal:1997} the exponential stability of the closed-loop system is proven for the control law \eqref{control_law_general} with $\vartheta_{4}=0$ under certain further conditions by using an energy multiplier approach. The proof of stability is performed in a rigorous and elegant way, however, there is no systematic design of the control law and it is not clear how to specifically tune the controller parameters. In contrast, the control law \eqref{control_law_general} used in this paper overcomes these deficiencies, but then the controller parameters for tuning appear in all parameters $\vartheta_{j}$, $j=1,\ldots,4$, which will be shown subsequently, and $\vartheta_{4}$ has to be unequal to zero. For the authors, it is not obvious and does not seem to be straightforward to extend the stability proof of \cite{Mifdal:1997} to the case $\vartheta_{4}\neq0$ under the same non-restrictive conditions on the controller parameters as presented in this paper.

For a controller with the same structure as in \eqref{control_law_general} but, compared to the controller considered in this paper, with totally different conditions on the parameters $\vartheta_{j}$, $j=1,\ldots,4$, the asymptotic and exponential stability of the closed-loop system is shown by \cite{Coron:Novel}. In this paper, the dynamics of the payload \eqref{sub:3} was neglected. Moreover, the controller was designed based on a backstepping approach, however, the authors mention in their paper that this boundary feedback law ensures asymptotic stability, but it is not clear if it produces exponential stability. Therefore, they modify the backstepping controller and based on this they are able to show exponential stability of the closed-loop system.

Inspired by \cite{Coron:Novel}, a controller was systematically designed and experimentally validated for the system \eqref{hamilton} of \cite{TWK06}. In order to reveal the connection between the controller parameters for tuning and the parameters $\vartheta_{j}$, $j=1,\ldots,4$ in \eqref{control_law_general}, the control design will be shortly revisited, see \citep{TWK06} for more details. In contrast to the (simple) passive controller presented by \cite{Conrad:Mifdal}, the (damping) controller design by \cite{TWK06} is based on the idea to specifically influence the energy flow between the cart and the chain, which is represented by the collocated variables cart velocity $\partial_{t}w\left(  t,0\right)  =v\left(  t,0\right)  $ and internal force in the pivot bearing carrying the chains $F_{i}=P\left(  0\right)  \partial_{x}w\left(  t,0\right)  $, thus forming an energy port, see right-hand side of Figure \ref{fig1}. The sum of the potential energy of the chain and the kinetic energy of the chain and the
payload according to \eqref{hamilton} reads as
\begin{equation}\label{energy1}
\bar{H}=\frac{1}{2}\int_{0}^{L}P(x)\left(  \partial_{x}w\left(  t,x\right)\right)  ^{2}+\rho v^{2}\left(  t,x\right)  \mathrm{d}x+\frac{1}{2}m_{p}v^{2}\left(  t,L\right)  \text{ .}%
\end{equation}
The change of $\bar{H}$ along a solution of \eqref{hamilton} yields $\frac{\mathrm{d}}{\mathrm{d}t}\bar{H}=-v\left(  t,0\right)  F_{i}$. Thus, if $v\left(t,0\right)$ were the (virtual) control input, the control law $v\left(t,0\right)  :=\chi_{1}F_{i}$, with the controller parameter $\chi_{1}>0$, would render the closed-loop system passive. However, this would ensure a good damping of the chain vibrations, but the cart position $w\left(  t,0\right)$ remains unconsidered within this approach. Therefore, the energy functional $\bar{H}$ from \eqref{energy1} is extended by the potential energy of a virtual linear\footnote{Note that in this paper the virtual spring force $f_s(\cdot)$ in \citep{TWK06} is considered linear.} spring attached to the cart in the form\footnote{Here the equilibrium of the cart position is set to zero, but can, of course, take any other value in the operating range. }
\begin{equation}\label{energy2}
\bar{V}=\chi_{1}\bar{H}+\frac{\chi_{2}}{2}w^{2}\left(  t,0\right)  \text{,}
\end{equation}
with the controller parameters $\chi_{1}$, $\chi_{2}>0$. The change of $\bar{V}$ along a solution of \eqref{hamilton}
\begin{equation}\label{energy3}
\frac{\mathrm{d}}{\mathrm{d}t}\bar{V}=\left(  \chi_{2}w\left(  t,0\right)-\chi_{1}F_{i}\right)  v\left(  t,0\right)  \text{ }%
\end{equation}
immediately shows that the control law $v\left(  t,0\right)  :=-\left(\chi_{2}w(t,0)-\chi_{1}F_{i}\right)$ for the virtual control input $v\left(  t,0\right)$ makes the closed-loop system passive again. The controller parameters $\chi_{1}$ and $\chi_{2}$ facilitate a simple tuning of the closed-loop behavior, because a larger $\chi_{1}$ (wrt $\chi_{2}$) brings along a higher damping of the chain vibrations at the cost of larger deviations of the cart position $w\left(  t,0\right)  $ from the equilibrium and for a larger $\chi_{2}$ (wrt $\chi_{1}$) the control of the cart position is becoming more important. Since $v\left(  t,0\right)  $ is not the real control input, a simple backstepping approach, see, e.g., \citep{refKrstic}, is applied to \eqref{sub:2}. In a nutshell, the functional $\bar{V}$ from \eqref{energy2} is extended in the form
\begin{equation}\label{energy4}
V=\bar{V}+\frac{1}{2}\left(  v\left(  t,0\right)  +\chi_{2}w\left(t,0\right)  -\chi_{1}F_{i}\right)  ^{2}
\end{equation}
and the control law is designed in such way that
\begin{equation}\label{energy5}
\frac{\mathrm{d}}{\mathrm{d}t}V=-v^{2}\left(  t,0\right)  -\chi_{3}\left(v\left(  t,0\right)  +\chi_{2}w\left(  t,0\right)  -\chi_{1}F_{i}\right)^{2}\text{, }%
\end{equation}
with the controller parameter $\chi_{3}>0$. The control law finally takes the form (see also \eqref{control_law_general})
\begin{align}\label{controllerA}
F\left(  t\right)  = & \underset{\vartheta_{1}}{\underbrace{-\left(  \chi_{3}+\chi_{2}+1\right)  m_{c}}}v\left(  t,0\right)  +\underset{\vartheta_{2}}{\underbrace{\chi_{1}P(0)m_{c}}}\partial_{x}v\left(  t,0\right)%
+\underset{\vartheta_{3}}{\underbrace{\left(  -\chi_{3}\chi_{2}m_{c}\right)  }}w\left(  t,0\right) \\
 & +\underset{\vartheta_{4}}{\underbrace{\left(  \chi_{3}\chi_{1}m_{c}-1\right)  P(0)}}\partial_{x}w\left(t,0\right)  \text{ .} \nonumber
\end{align}
The third controller parameter $\chi_{3}$ weights the deviation of the virtual control input $v\left(  t,0\right)  $ from its desired time evolution $\chi_{2}w\left(  t,0\right)  -\chi_{1}F_{i}$. Note that in addition to the feedback control law \eqref{controllerA} the control concept of \cite{TWK06} consists of a flatness-based feedforward controller as presented by \cite{Petit:Rouchon}. Due to the linearity of the system \eqref{hamilton} the trajectory error dynamics are identical and thus also the stability proof remains the same.

\medskip

In \citep{TWK06}, energy dissipation of the closed-loop system was shown for \eqref{controllerA}. \cite{Thull:Wild:Kugi:at} attempted to show asymptotic stability by using LaSalle's invariance principle (see, \citep{lgm}). This is common practice in the context of (hyperbolic) control systems, see, e.g., ~\citep{msa_15_1, MSAK, Chentouf:Couchouron, conrad1998stabilization,Coron:Novel, KT05,  Morgul2001} for the control of an Euler-Bernoulli beam, and \citep{MR1173433, Morgul1994, MR1828905} for the control of hanging cables. However, with the (energy) inner product chosen by \cite{Thull:Wild:Kugi:at,TWK06} the proof of the closed-loop stability did not work out, which was also correctly pointed out by \cite{grab1}. The backstepping approach presented by \cite{TWK06} is quite intuitive from a control point of view, but it brings along the drawback that the control law depends on $\pd_xv(t,0)$, see \eqref{controllerA}, which makes it impossible to analyze the closed-loop system in the space $H^1\times L^2$.
Therefore, an appropriate Hilbert space $\H$ and a convenient inner product, see \eqref{inner_prod}, is introduced in this paper which allows the application of the Lumer-Phillips theorem and the rigorous proof of the closed-loop stability of \eqref{hamilton} with \eqref{controllerA}. Due to the change of the abstract model setting, the strategy to prove asymptotic stability does not make use of LaSalle's invariance principle. Instead, techniques from spectral analysis are used, see \citep{lgm}. In order to show exponential stability, Huang's theorem, see, e.g.,~\citep{ref:huang}, is employed.

In order to prove stability of the closed-loop system, in a first step \eqref{hamilton} with \eqref{controllerA} is rewritten as an abstract evolution equation $\dot y= Ay$ in an appropriate Hilbert space $\H$, where the generator $A$ is a linear operator, see, e.g., \citep{MR1828905,grab2,grab1,Morgul1994,lgm} for the formulation of similar systems describing hanging cables. We start by showing that $A$ generates a $C_0$-semigroup of contractions, by using the Lumer-Phillips theorem. To this end an inner product, equivalent to the natural inner product in $\H$, is used. Then, we show that the inverse $A^{-1}$ exists and is compact. This implies that the spectrum $\sigma(A)$ consists entirely of eigenvalues. Since $A$ generates a $C_0$-semigroup of uniformly bounded operators, the Hille-Yosida theorem implies that $\sigma(A)\subseteq \{\zeta\in \C:\Re\zeta\le 0\}$. We then show that $\i\R$ lies in $\rho(A)$, by demonstrating that for all $\lambda\in\R$ the eigenvalue equation $Ay=\i\lambda y$ only has the
trivial solution. According to \citep[][Theorem~3.26]{lgm} this proves the asymptotic stability of the system. Finally, we show uniform boundedness of the resolvent $(\i\lambda-A)^{-1}$ for $\lambda\in\R$. Huang's Theorem (cf.~Corollary 3.36 of \cite{lgm}, see also \citep{ref:huang}) then implies exponential stability of the closed-loop system.

\medskip

The paper is organized as follows: In Section \ref{sec:2}, we prove that $A$ generates a $C_0$-semigroup of uniformly bounded operators. In Section \ref{sec:3}, we show the asymptotic stability of this semigroup, and  Section \ref{sec:4} is devoted to the proof of the exponential stability. Finally, Section \ref{sec:5} contain some conclusions.

%%%%%%%%%%%%%%%%%%%%%%%%%%%%%%%%%%%%%%%%%%%%%%%%%%%%%%%%%%%%%%%%%%%%%%%%%%%%%%

\section{Formulation as a Dissipative Evolution Equation}\label{sec:2}

For the mathematical analysis of the system \eqref{hamilton} with \eqref{controllerA} it is convenient to eliminate most numerical coefficients. To this end, we rescale length and time, i.e.~we introduce new variables $\tilde x= \frac{P(L)\rho}{m_p}x$ and $\tilde t = \frac{P(L)}{m_p}\sqrt{\rho}t$. With $\tilde w(\tilde t,\tilde x) := w(t,x)$ and $\tilde P(\tilde x) = P(x)$ the system \eqref{hamilton} is equivalent to
\begin{align}
	\pd_{\tilde t}^2\tilde w(\tilde t,\tilde x)&=\pd_{\tilde x}(\tilde P(\tilde x)\pd_{\tilde x}\tilde w(\tilde t,\tilde x),\quad \tilde x\in(0,\tilde L), \tilde t>0,\tag{\ref{sub:1}'}\\
	\pd_{\tilde t}^2\tilde w(\tilde t,\tilde L)&=-\pd_{\tilde x}\tilde w(\tilde t,\tilde L),\tag{\ref{sub:3}'}\\
	\pd_{\tilde t}^2\tilde  w(\tilde t,0)&=\tilde\theta_1\tilde v(\tilde t,0)+\tilde\theta_2\pd_{\tilde x}\tilde v(\tilde t,0)+\tilde\theta_3\tilde w(\tilde t,0)+\tilde\theta_4\pd_{\tilde x}\tilde w(\tilde t,0),\tag{\ref{sub:2}'}
\end{align}
in new coordinates. In (\ref{sub:2}'), $m_c$ and all additional factors arising from the change of coordinates as well as the term $\tilde P(0)\pd_{\tilde x} \tilde w(\tilde t,0)$ have been merged in the new coefficients $\tilde \theta_i$, $i=1,\ldots,4$. In the following, we only consider the system  (\ref{hamilton}'). However, for the sake of readability, we will omit the superscript tilde in the sequel and simply write $x,\,t$, $\theta_i$, $w$, and $P$.

For the analysis of (\ref{hamilton}') we define the (complex) Hilbert space
\begin{equation}\label{1}
\H=\{z=(w,v,\xi,\psi):w\in H^2(0,L),\,v\in H^1(0,L),\, \xi= v(L),\,\psi=v(0)\},
\end{equation}
which is a closed subspace of $H^2\times H^1\times \C\times\C$. Here, $H^n(0,L)$ denotes the Sobolev space of functions whose derivatives up to order $n$ are square-integrable \citep[see][for details]{ad}. The auxiliary scalar variables $\xi$, $\psi$ are introduced here in order to include the dynamical boundary conditions (\ref{sub:3}') and (\ref{sub:2}') into the initial value problem. $\H$ is equipped with the natural inner product
\begin{equation}\la z_1, z_2\ra=\la w_1, w_2\ra_{H^2}+\la v_1,v_2\ra_{H^1}+\xi_1\bar\xi_2+\psi_1\bar\psi_2,\end{equation}
where $\bar \xi$ denotes the complex conjugate of $\xi$. Let the linear operator $A:D(A)\subset \H\to\H$ be defined as
\begin{equation}\label{de:a}
A:\left[\begin{array}{c}w\\v\\\xi\\\psi\end{array}\right]\mapsto \left[\begin{array}{c}v\\(P w')'\\-
w'(L)\\\theta_1v(0)+\theta_2v'(0)+\theta_3w(0)+\theta_4 w'(0)\end{array}\right],
\end{equation}
where $w'$ denotes the spatial derivative of $w$, i.e.~$w'=\pd_xw$. The (dense) domain of $A$ is defined as
\begin{align}
	D(A):=\big\{z=(w,v,\xi,\psi)&:w\in H^3(0,L),\, v\in H^2(0,L),\,\xi=v(L),\, \psi=v(0), \label{d_a}\\
		&\quad(P w')'(L)=-w'(L), \,(Pw')'(0)=F[w,v]\big\},\nonumber
\end{align}
{\color{black} with
$$
  F[w,v]:=\theta_1 v(0) +\theta_2 v'(0) +\theta_3 w(0) +\theta_4 w'(0)
$$
due to (\ref{hamilton}').}

The boundary conditions stated in $D(A)$ arise naturally from the requirement that $\ran A\subset \H$. With these definitions, we can rewrite the system (\ref{hamilton}') as the following initial value problem in $\H$:
\begin{equation}\label{ivp:0}
\begin{cases}
\dot z(t)=Az(t),\\
z(0)=z_0\in \H.
\end{cases}
\end{equation}

For some of the following proofs, the natural inner product $\la\cdot,\cdot\ra$ on $\H$ is unpractical. Therefore, we define an equivalent inner product, which is more suitable for the considered problem:
\begin{align}
\la z_1, z_2\rah &:=\alpha_1\int_0^L\left[ {\gamma}(P w_1')'(P\bar w_2')'+P w'_1 \bar w_2'\right]\d x+\alpha_1\gamma
P(L) w'_1(L)  \bar w_2'(L)\label{inner_prod}\\
			&\quad+{\alpha_2}w_1(0)  \bar w_2(0)+\alpha_1\int_0^L\left(\gamma P v_1' \bar  v_2'+ v_1  \bar
v_2\right)\d x+\alpha_1P(L)\xi_1 \bar \xi_2+{\alpha_2\gamma}\psi_1 \bar \psi_2\nonumber\\
			&\quad+\frac12\big(\psi_1-2\alpha_1P(0) w_1'(0)+2\alpha_2w_1(0)\big)\big( \bar \psi_2-2\alpha_1P(0)
 \bar w_2'(0)+2\alpha_2  \bar w_2(0)\big),\nonumber
\end{align}
where $\alpha_1,\alpha_2$, and $\gamma$ are positive constants to be specified later (in Lemma \ref{lem:diss} and the corresponding proof). We have the following lemma:

\begin{lem}\label{equivalence}
The norm $\|\cdot\|_\H$ is equivalent to the natural norm $\|\cdot\|$ on $\H$.
\end{lem}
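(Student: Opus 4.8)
The plan is to prove a two-sided estimate $c\,\|z\|\le\|z\|_\H\le C\,\|z\|$ for all $z=(w,v,\xi,\psi)\in\H$, with constants $0<c\le C$ independent of $z$. Since $\alpha_1,\alpha_2,\gamma>0$ and $P\ge P^0>0$ on $[0,L]$, every summand appearing in \eqref{inner_prod} with $z_1=z_2=z$ is nonnegative (the last one being a perfect square), so $\|z\|_\H^2$ really is the square of a norm and it suffices to compare it termwise with $\|z\|^2=\|w\|_{H^2}^2+\|v\|_{H^1}^2+|\xi|^2+|\psi|^2$. I will repeatedly use two elementary facts in one space dimension: (a) since $P\in H^2(0,L)\hookrightarrow C^1([0,L])$ and $P\ge P^0>0$, the functions $P$, $P'$ and $1/P$ are bounded on $[0,L]$; (b) the embeddings $H^2(0,L)\hookrightarrow C^1([0,L])$ and $H^1(0,L)\hookrightarrow C([0,L])$ give the trace bounds $|w(0)|\le C\|w\|_{H^2}$, $|w'(0)|,|w'(L)|\le C\|w\|_{H^2}$ and $|v(0)|,|v(L)|\le C\|v\|_{H^1}$.

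For the upper bound I would bound the terms of \eqref{inner_prod} one at a time. Expanding $(Pw')'=P'w'+Pw''$ and using (a) gives $\|(Pw')'\|_{L^2}\le C\|w\|_{H^2}$; clearly $\int_0^L P\,|w'|^2\le\|P\|_\infty\|w'\|_{L^2}^2$ and $\int_0^L(\gamma P|v'|^2+|v|^2)\le C\|v\|_{H^1}^2$; the boundary terms $P(L)|w'(L)|^2$ and $|w(0)|^2$ as well as the square $|\psi-2\alpha_1P(0)w'(0)+2\alpha_2 w(0)|^2$ are all controlled by $C(\|w\|_{H^2}^2+|\psi|^2)$ via the trace bounds in (b); and $|\xi|^2$, $|\psi|^2$ are literally part of $\|z\|^2$. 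Summing these estimates yields $\|z\|_\H^2\le C\|z\|^2$.

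For the lower bound I would use that $\|z\|_\H^2$ dominates each of its (nonnegative) summands separately. This gives at once $\|v\|_{H^1}^2\le C\|z\|_\H^2$ (from $\alpha_1\int_0^L(\gamma P|v'|^2+|v|^2)\ge\alpha_1\min(\gamma P^0,1)\|v\|_{H^1}^2$), $|\xi|^2\le C\|z\|_\H^2$, $|\psi|^2\le C\|z\|_\H^2$, $\|w'\|_{L^2}^2\le C\|z\|_\H^2$ (from $\int_0^L P|w'|^2\ge P^0\|w'\|_{L^2}^2$), $|w(0)|^2\le C\|z\|_\H^2$ (from the $\alpha_2 w(0)\bar w(0)$ term) and $\|(Pw')'\|_{L^2}^2\le C\|z\|_\H^2$. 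The two quantities contributing to $\|w\|_{H^2}$ that do not appear explicitly in \eqref{inner_prod} are then recovered from these: from $w''=\tfrac1P[(Pw')'-P'w']$ and the boundedness of $1/P$, $P'$ one gets $\|w''\|_{L^2}\le C\|z\|_\H$, and from $w(x)=w(0)+\int_0^x w'(s)\d s$ one gets the Poincar\'e-type inequality $\|w\|_{L^2}^2\le 2L|w(0)|^2+2L^2\|w'\|_{L^2}^2\le C\|z\|_\H^2$. Combining, $\|w\|_{H^2}^2\le C\|z\|_\H^2$, hence $\|z\|^2\le C\|z\|_\H^2$.

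I do not anticipate a real obstacle: this is a routine equivalence of norms. The only two points needing slight care are that the perfect-square cross term in \eqref{inner_prod} is useless for bounding anything from below --- which is precisely why the separate term $\alpha_2 w(0)\bar w(0)$ is indispensable for controlling $|w(0)|$, and through it $\|w\|_{L^2}$ --- and that $\|w''\|_{L^2}$ and $\|w\|_{L^2}$ must be reconstructed from the terms that do occur, using $w''=P^{-1}[(Pw')'-P'w']$ and a Poincar\'e estimate. All remaining estimates are straightforward bookkeeping with the one-dimensional Sobolev embeddings and the uniform bounds $0<P^0\le P$, $P\in H^2(0,L)$.
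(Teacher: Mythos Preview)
Your argument is correct and complete. The one place where your route differs from the paper's is in how you recover $\|w''\|_{L^2}$ from the terms appearing in $\|z\|_\H^2$. The paper reduces the lower bound to the pointwise inequality
\[
\gamma\,|(Pw')'(x)|^2 + P(x)\,|w'(x)|^2 \;\ge\; \tilde c_1\big(|w''(x)|^2+|w'(x)|^2\big),
\]
which it proves via a $2\times2$ quadratic-form argument (Lemma~\ref{lem:A:1} in Appendix~\ref{app:a}, essentially a Sylvester-criterion computation with $a=\sqrt{\gamma}\,P'(x)$, $b=\sqrt{\gamma}\,P(x)$, $\epsilon=P(x)$). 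You instead bound $\|(Pw')'\|_{L^2}$ and $\|w'\|_{L^2}$ separately from $\|z\|_\H$ and then reconstruct $w''$ algebraically via $w''=P^{-1}\big[(Pw')'-P'w'\big]$ together with the boundedness of $1/P$ and $P'$. Both approaches work; yours is a bit more elementary (no auxiliary lemma, just triangle inequality), while the paper's packages the step into a single pointwise estimate. Your treatment of the remaining pieces --- the trace bounds, the Poincar\'e step for $\|w\|_{L^2}$ from $|w(0)|$ and $\|w'\|_{L^2}$, and the upper bound --- is standard and matches what the paper leaves implicit.
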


\begin{proof}
We have to prove the existence of constants $c_1,c_2>0$ such that $c_1\|z\|\le \|z\|_\H\le c_2\|z\|$ holds for all $z\in\H$. To verify the first inequality, it remains to show the existence of $\tilde c_1$ such that
\begin{equation}\label{ineq:1}
\int_0^L\Big[{\gamma}|(P w')'|^2+P| w'|^2\Big]\d x\ge \tilde c_1\int_0^L\Big[| w''|^2+| w'|^2\Big]\d x
\end{equation}
holds for all real-valued $w\in H^2(0,L)$. Using the properties of $P$ mentioned above, Lemma \ref{lem:A:1} (see Appendix \ref{app:a}) can be applied pointwise in $x$ with $a=\sqrt{{\gamma}}\, P'(x),\,b=\sqrt{{\gamma}}\, P(x),\,\epsilon=P(x),\, x_1= |w'(x)|$, and $x_2= |w''(x)|$, which directly yields the  desired inequality (\ref{ineq:1}).

To verify the second inequality, it suffices to apply Cauchy's inequality $ab\le \frac{a^2}2+\frac{b^2}2, \, a,b\in\R$, to the terms obtained by expansion of the last term in $\|z\|^2_\H$.
\end{proof}

The main statement of this section is the following theorem, which will be proved in several steps:

\begin{trm}\label{trm:1}
Let there be constants $a,b>0$ satisfying $(a+b-1)^2<4ab$, such that
\begin{equation}\label{cond:theta}
	\theta_1=\frac{\theta_3}b-a,\quad	\theta_2=\frac{\theta_4}b,
\end{equation}
and $\theta_1,\theta_3<0$ and $\theta_2,\theta_4>0$. Then, the operator $A$ is the infinitesimal generator of a $C_0$-semigroup of uniformly bounded operators $\{T(t)\}_{t\ge0}$ on $\H$.
\end{trm}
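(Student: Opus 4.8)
The plan is to verify the hypotheses of the Lumer--Phillips theorem for $A$ on the Hilbert space $\H$ equipped with the equivalent inner product $\la\cdot,\cdot\rah$ from \eqref{inner_prod}. Since by Lemma~\ref{equivalence} the norm $\|\cdot\|_\H$ is equivalent to the natural norm $\|\cdot\|$, a $C_0$-semigroup of contractions on $(\H,\|\cdot\|_\H)$ is automatically a $C_0$-semigroup of uniformly bounded operators on $(\H,\|\cdot\|)$. Hence it suffices to establish: (i) $D(A)$ is dense in $\H$; (ii) $A$ is dissipative with respect to $\la\cdot,\cdot\rah$; and (iii) $\ran(I-A)=\H$ (equivalently, $\ran(\lambda_0 I-A)=\H$ for some $\lambda_0>0$).

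Step (i) is routine: given $z=(w,v,\xi,\psi)\in\H$ one approximates $w$ in $H^2(0,L)$ and $v$ in $H^1(0,L)$ by smooth functions, and then adds a small correction from a fixed finite-dimensional family of smooth functions to restore the four boundary constraints in \eqref{d_a}; this stays inside $\H$ and can be made arbitrarily small.

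Step (ii) is the heart of the proof and the step I expect to be the main obstacle. For $z\in D(A)$ one inserts the four components of $Az$ from \eqref{de:a} into \eqref{inner_prod} in place of $z_1$, with $z_2=z$, and takes the real part. Integrating by parts in the integrals over $(0,L)$ (shifting all derivatives onto one factor, so that only boundary contributions at $x=0,L$ and telescoping volume terms are produced) and using the two relations $(Pw')'(L)=-w'(L)$ and $(Pw')'(0)=F[w,v]=\theta_1 v(0)+\theta_2 v'(0)+\theta_3 w(0)+\theta_4 w'(0)$ built into $D(A)$, the volume contributions cancel and one is left with $\Re\la Az,z\rah=-Q$, where $Q$ is a real quadratic form in the boundary quantities $v(0),\,v'(0),\,w(0),\,w'(0)$. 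The constants $\alpha_1,\alpha_2,\gamma>0$ in \eqref{inner_prod} are still at our disposal; the idea is to choose them in terms of $a$ and $b$ and to use the two identities \eqref{cond:theta} to eliminate $\theta_1$ and $\theta_2$ in favour of $\theta_3,\theta_4$, so that the $v'(0)$-contribution is absorbed and $Q$ reduces to a quadratic form whose coefficient matrix has a discriminant governed by $(a+b-1)^2-4ab$. The hypothesis $(a+b-1)^2<4ab$, together with the sign conditions $\theta_1,\theta_3<0$ and $\theta_2,\theta_4>0$, then makes $Q$ positive semidefinite, i.e.\ $\Re\la Az,z\rah\le 0$. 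This is the operator-theoretic counterpart of the Lyapunov identity $\frac{\mathrm{d}}{\mathrm{d}t}V=-v(t,0)^2-\chi_3(v(t,0)+\chi_2 w(t,0)-\chi_1 F_i)^2$ that motivated the controller design; the delicate part is keeping track of the numerous boundary terms and verifying the sign of $Q$.

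For step (iii) I would take $\lambda_0=1$ and solve $(I-A)z=f$ for an arbitrary $f=(f_1,f_2,f_3,f_4)\in\H$. The first, third and fourth components force $v=w-f_1$, $\xi=f_3-w'(L)$ and $\psi=f_4+F[w,v]$, while the second component, combined with the defining relations of $D(A)$ at $x=0$ and $x=L$, reduces to the two-point boundary value problem $w-(Pw')'=f_1+f_2$ on $(0,L)$ with Robin-type conditions at both endpoints. Because $P\ge P^0>0$, $P\in H^2(0,L)$, $\theta_2+\theta_4>0$ and $1-\theta_1-\theta_3>0$, the associated sesquilinear form on $H^1(0,L)$ is bounded and coercive (the boundary terms carry the correct signs), so Lax--Milgram provides a unique weak solution $w\in H^1(0,L)$; an ODE regularity bootstrap using $f_1+f_2\in H^1(0,L)$ and $1/P\in H^2(0,L)$ then yields $w\in H^3(0,L)$, hence $v\in H^2(0,L)$, and all constraints of \eqref{d_a} hold, so $z\in D(A)$ and $(I-A)z=f$. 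With (i)--(iii) verified, the Lumer--Phillips theorem gives that $A$ generates a $C_0$-semigroup of contractions on $(\H,\|\cdot\|_\H)$, and Lemma~\ref{equivalence} upgrades it to a $C_0$-semigroup of uniformly bounded operators on $\H$, proving the theorem.
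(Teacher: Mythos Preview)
Your overall strategy coincides with the paper's: apply the Lumer--Phillips theorem on $(\H,\|\cdot\|_\H)$ and then pass to the natural norm via Lemma~\ref{equivalence}. Step~(i) is Lemma~\ref{lem:dense}, and your description of step~(ii) is the content of Lemma~\ref{lem:diss}. One small correction there: after the integrations by parts and the specific choice $\alpha_1=\theta_2/(2P(0))$, $\alpha_2=-\theta_2\theta_3/(2\theta_4)$, the boundary expression is a quadratic form in \emph{three} independent quantities (essentially $v(0)$, $-2\alpha_1P(0)w'(0)+2\alpha_2w(0)$, and $-2\alpha_1P(0)v'(0)+2\alpha_2v(0)$), not two; the parameter $\gamma$ is what couples in the third variable, and it must be taken sufficiently small so that the $3\times3$ Sylvester conditions reduce to the single strict inequality $(a+b-1)^2<4ab$. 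Your phrase ``discriminant governed by $(a+b-1)^2-4ab$'' is the right endpoint, but the argument is a $3\times3$ positive-semidefiniteness check, not a $2\times2$ one.

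The genuine difference is in step~(iii). The paper does \emph{not} solve $(I-A)z=f$; instead it shows directly that $0\in\rho(A)$ by writing down the explicit solution of $Az=(f,g,g(L),g(0))$ (Lemma~\ref{lem:inv}) and then invokes the openness of $\rho(A)$ to produce some $\lambda_0>0$ in $\rho(A)$. Your Lax--Milgram argument for $\lambda_0=1$ is a legitimate alternative: with $v=w-f_1$ the problem collapses to $w-(Pw')'=f_1+f_2$ with Robin data at both ends, and the sign conditions $\theta_1,\theta_3<0$, $\theta_2,\theta_4>0$ indeed make the boundary contributions to the sesquilinear form nonnegative, so coercivity on $H^1(0,L)$ follows from $P\ge P^0>0$; the bootstrap to $w\in H^3$ is straightforward. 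What the paper's route buys, and yours does not, is the compactness of $A^{-1}$ (Lemma~\ref{lem:compact}): the explicit formulas~\eqref{dw}--\eqref{w} give the $H^3\times H^2$ bound on $A^{-1}$ needed for compactness, which is then reused in Section~\ref{sec:3} to show that $\sigma(A)$ consists only of eigenvalues. For Theorem~\ref{trm:1} in isolation either approach suffices, but the paper's choice is dictated by the later stability analysis.
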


The conditions of Theorem \ref{trm:1} on the parameters $\vartheta_{j}$, $j=1,\ldots,4$ from (\ref{sub:2}') are fulfilled if the controller parameters $\chi_{1}$, $\chi_{2}$, and $\chi_{3}$ according to \eqref{controllerA} meet the following non-restrictive inequality constraints
\begin{equation}\label{conditionI}
\chi_{1}>0\text{,\quad }\chi_{2}>0\text{,\quad and }\chi_{3}>\frac{\left(m_{p}-P\left(  L\right)  \sqrt{\rho}\right)  ^{2}}{4m_{p}P\left(  L\right)\sqrt{\rho}}\text{, }%
\end{equation}
where
\begin{equation}\label{conditionII}
b=\frac{\tilde{\vartheta}_{4}}{\tilde{\vartheta}_{2}}=\frac{m_{p}\alpha_{3}}{P\left(  L\right)  \sqrt{\rho}}>0\text{\quad and \quad}a=\frac{\tilde{\vartheta}_{3}}{b}-\tilde{\vartheta}_{1}=\frac{\left(  \alpha_{3}+1\right)m_{p}}{P\left(L\right)  \sqrt{\rho}}>0\text{ .}%
\end{equation}
{\color{black} In Theorem \ref{trm:1}, the admissible parameters $(a,b)$ lie inside a parabola in the first quadrant, and this parabola is tangent to the positive $a-$ and $b-$axes.}

{\color{black}
\begin{rem}
 Let us briefly compare the model (\ref{hamilton}') subject to \eqref{cond:theta} with the closed-loop system from \citep{Mifdal:1997} and even its extension in \S0(a) of \citep{Mifdal-diss:1997}. We recall that the latter two models correspond to a control law with $\theta_4=0$. (\ref{hamilton}'), \eqref{cond:theta} can only be matched with the models analyzed by Mifdal when using the special parameters $a=0$ and $b=\frac{P(0)}{P(L)} \frac{M}{\theta_2 m}$ in \eqref{cond:theta}. Here, $M$ and $m$ denote parameters related to the masses of the payload and the cart in \citep{Mifdal:1997}. Since Theorem \ref{trm:1} requires $a>0$, these two types of models and results are complementary to each other.
\end{rem}}

\begin{rem}
 With respect to the inner product specified in Lemma \ref{lem:diss}, this semigroup is even a semigroup of contractions, see the proof of Theorem \ref{trm:1} below.
\end{rem}

We shall prove Theorem~\ref{trm:1} by applying the Lumer-Phillips theorem \citep[cf.][]{pazy}. But before, we verify some basic properties of $A$.

\begin{lem}\label{lem:dense}
The domain $D(A)$ defined in (\ref{de:a}) is dense in $\H$.
\end{lem}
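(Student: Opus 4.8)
The plan is to show that every $z=(w,v,\xi,\psi)\in\H$ is an $\H$-limit (equivalently, a limit in the $H^2\times H^1\times\C\times\C$-norm) of a sequence in $D(A)$. Since $\xi=v(L)$ and $\psi=v(0)$ are determined by $v$ through the continuous trace map $H^1(0,L)\hookrightarrow C[0,L]$, it suffices to construct $\tilde w_n\in H^3(0,L)$ and $\tilde v_n\in H^2(0,L)$ with $\tilde w_n\to w$ in $H^2(0,L)$ and $\tilde v_n\to v$ in $H^1(0,L)$ such that the two boundary conditions of (\ref{d_a}), namely $(P\tilde w_n')'(L)=-\tilde w_n'(L)$ and $(P\tilde w_n')'(0)=F[\tilde w_n,\tilde v_n]$, hold exactly for each $n$; then $\tilde z_n:=(\tilde w_n,\tilde v_n,\tilde v_n(L),\tilde v_n(0))\in D(A)$ and $\tilde z_n\to z$ in $\H$.

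First I would pick arbitrary $w_n\in C^\infty[0,L]$ with $w_n\to w$ in $H^2(0,L)$ and $v_n\in C^\infty[0,L]$ with $v_n\to v$ in $H^1(0,L)$ (these exist by density of smooth functions), and set $\tilde v_n:=v_n$. By the embeddings $H^2\hookrightarrow C^1[0,L]$ and $H^1\hookrightarrow C[0,L]$ the endpoint data $w_n(0),w_n'(0),w_n'(L),v_n(0),v_n(L)$ converge, whereas $w_n''(0),w_n''(L),v_n'(0)$ are \emph{not} controlled by these convergences. Using $P\in H^2(0,L)\hookrightarrow C^1[0,L]$ together with $P(0),P(L)>0$, the two boundary conditions reduce to prescribing the endpoint second derivatives of $\tilde w_n$: if $\tilde w_n$ agrees with $w_n$ up to first order at $x=0$ and $x=L$, then both conditions hold as soon as $\tilde w_n''(0)=c_0^{(n)}$ and $\tilde w_n''(L)=c_L^{(n)}$, where
\[
 c_L^{(n)}:=-\frac{1+P'(L)}{P(L)}\,w_n'(L),\qquad
 c_0^{(n)}:=\frac{\theta_1 v_n(0)+\theta_2 v_n'(0)+\theta_3 w_n(0)+(\theta_4-P'(0))\,w_n'(0)}{P(0)}.
\]
Put $\delta_L^{(n)}:=c_L^{(n)}-w_n''(L)$ and $\delta_0^{(n)}:=c_0^{(n)}-w_n''(0)$; for each fixed $n$ these are finite scalars, though possibly unbounded as $n\to\infty$.

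The correction is effected by localized bump functions whose second derivative at an endpoint equals $1$ but whose $H^2$-norm is arbitrarily small. I would fix $\eta\in C^\infty([0,1])$ with $\eta(0)=\eta'(0)=0$, $\eta''(0)=1$, $\operatorname{supp}\eta\subset[0,1)$, and put $\eta_\epsilon(x):=\epsilon^2\eta(x/\epsilon)$ for $0<\epsilon<L/2$; then $\eta_\epsilon\in C^\infty[0,L]$ is supported in $[0,\epsilon)$, satisfies $\eta_\epsilon(0)=\eta_\epsilon'(0)=0$ and $\eta_\epsilon''(0)=1$, and a direct rescaling computation gives $\|\eta_\epsilon\|_{H^2}^2=\epsilon^5\|\eta\|_{L^2}^2+\epsilon^3\|\eta'\|_{L^2}^2+\epsilon\|\eta''\|_{L^2}^2\to0$. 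Let $\zeta_\epsilon$ be the mirror-image construction at $x=L$. I would then choose $\epsilon_n\to0$ with $\epsilon_n<L/2$ so small that $|\delta_0^{(n)}|\,\|\eta_{\epsilon_n}\|_{H^2}+|\delta_L^{(n)}|\,\|\zeta_{\epsilon_n}\|_{H^2}<1/n$, and set $\tilde w_n:=w_n+\delta_0^{(n)}\eta_{\epsilon_n}+\delta_L^{(n)}\zeta_{\epsilon_n}\in C^\infty[0,L]\subset H^3(0,L)$. Since the two correctors have disjoint supports, each vanishes together with its first derivative at its endpoint, and each raises the second derivative there by exactly $\delta_0^{(n)}$, resp.\ $\delta_L^{(n)}$, one has $\tilde w_n(0)=w_n(0)$, $\tilde w_n'(0)=w_n'(0)$, $\tilde w_n''(0)=c_0^{(n)}$, and likewise at $L$; hence the two boundary conditions of (\ref{d_a}) are satisfied and $\tilde z_n\in D(A)$.

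Finally $\|\tilde w_n-w\|_{H^2}\le\|\tilde w_n-w_n\|_{H^2}+\|w_n-w\|_{H^2}<1/n+\|w_n-w\|_{H^2}\to0$, while $\tilde v_n=v_n\to v$ in $H^1$ (so also $\tilde v_n(L)\to v(L)=\xi$ and $\tilde v_n(0)\to v(0)=\psi$), whence $\tilde z_n\to z$ in $\H$; since $z\in\H$ was arbitrary, $D(A)$ is dense. I do not anticipate a genuine obstacle here; the one point requiring care is precisely that $w_n''(0),w_n''(L),v_n'(0)$ are free quantities not pinned down by $H^2$/$H^1$-convergence, so the required correction $\delta_0^{(n)},\delta_L^{(n)}$ may be large, which forces the bump width $\epsilon_n$ to shrink with $n$ — exactly what the rescaling $\epsilon^2\eta(x/\epsilon)$ accommodates.
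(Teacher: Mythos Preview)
Your proof is correct and follows essentially the same strategy as the paper: approximate $(w,v)$ by smoother functions and then add small-in-$H^2$ corrections that adjust only the endpoint second derivatives so that the two boundary conditions in $D(A)$ are met exactly. The paper builds its corrector by taking $u_n\in H^1$ with $u_n(0)=a$ and $\|u_n\|_{L^2}\to 0$ and integrating twice, whereas you use rescaled cut-off functions $\epsilon^2\eta(x/\epsilon)$; both produce functions with prescribed second derivative at an endpoint and vanishing $H^2$-norm, so this is only a cosmetic difference, and your version is in fact more explicit about the endpoint bookkeeping and the need to let the bump width depend on $n$.
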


\begin{proof}
Let $z_0=(w_0,v_0,\xi_0,\psi_0)\in \H$. Since the inclusions $H^3(0,L)\subset H^2(0,L)\subset H^1(0,L)$ are dense, there
exists a sequence $z_n=(w_n,v_n,\xi_n,\psi_n)\in H^3(0,L)\times H^2(0,L)\times \C^2\cap \H$ such that $z_n\to z_0$ in
$\H$. % This \H produces an error message on Anton's laptop, but output is correct.
Now, in general, the second derivatives $\pd^2w_n(0),\pd^2w_n(L)$ will not satisfy the boundary conditions
necessary for $z_n\in D(A)$.\

The fact that $H_0^1(0,L)\subset L^2(0,L)$ is dense ensures the existence of a sequence $\{u_n\}\subset H^1(0,L)$ satisfying $u_n(0)=a$ for all $n\in\N$ and any fixed $a\in\C$, with $\|u_n\|_{L^2}\to 0$. The sequence $\{y_n\}$ defined by
\[y_n:=\int_0^x\int_0^\xi u_n(\zeta)\d \zeta\d \xi\]
satisfies $\pd ^2 y_n(0)=a$ for all $n\in\N$, and $\|y_n\|_{H^2}\to 0$.

This shows that, for the sequence $\{w_n\}$, the
values $\pd^2w_n(0),\,\pd^2w_n(L)$ can be modified such that the modified sequence $\{\tilde z_n\}\subset D(A)$, but still $\tilde z_n\to z_0$ in $\H$.
\end{proof}

\begin{lem}\label{lem:inv}
Under the condition $\theta_3\neq 0$, the operator $A$ is injective and $\ran A=\H$, i.e.~$A^{-1}$ exists and $D(A^{-1})=\H$.
\end{lem}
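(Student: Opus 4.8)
The plan is to solve the equation $Az=f$ explicitly for an arbitrary $f=(f_1,f_2,f_3,f_4)\in\H$, and to read off both injectivity and surjectivity from the fact that every integration constant is uniquely pinned down. Writing the unknown as $z=(w,v,\xi,\psi)$ and matching components in \eqref{de:a}, the first and third slots force
\[
 v=f_1,\qquad \xi=v(L)=f_1(L),\qquad \psi=v(0)=f_1(0),\qquad w'(L)=-f_3,
\]
while the second slot gives the ODE $(Pw')'=f_2$ on $(0,L)$. Since $f\in\H$ we have $f_1\in H^2(0,L)$, so $v=f_1$ automatically has the regularity $H^2(0,L)$ demanded in \eqref{d_a}.

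Next I would integrate the ODE. Because $f_2\in H^1(0,L)$, one integration yields $Pw'=c_1+\int_0^x f_2\,\d s$ with a constant $c_1$, and the condition $w'(L)=-f_3$ forces $c_1=-P(L)f_3-\int_0^L f_2\,\d s$. As $P\in H^2(0,L)$ and $P\ge P^0>0$, the reciprocal $1/P$ again lies in $H^2(0,L)$, hence $w'=\tfrac1P\big(c_1+\int_0^x f_2\,\d s\big)\in H^2(0,L)$ and therefore $w\in H^3(0,L)$, as required. A second integration gives $w(x)=c_0+\int_0^x w'(s)\,\d s$ with $c_0=w(0)$ still free. To fix $c_0$ I use the fourth component of $Az=f$, namely $\theta_1 v(0)+\theta_2 v'(0)+\theta_3 w(0)+\theta_4 w'(0)=f_4$, in which $v(0)=f_1(0)$, $v'(0)=f_1'(0)$ and $w'(0)=c_1/P(0)$ are already known. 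This is a linear equation for $w(0)$ with coefficient $\theta_3$, and exactly here the hypothesis $\theta_3\neq0$ enters: it lets us solve uniquely for $c_0=w(0)$, determining $w$, and thus $z$, completely.

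It then remains to check that the $z$ so constructed lies in $D(A)$ and satisfies $Az=f$. The regularity $w\in H^3$, $v\in H^2$ has been arranged; the boundary relation $(Pw')'(0)=F[w,v]$ is precisely the fourth-component equation just solved; and $(Pw')'(L)=-w'(L)$ holds automatically, since $(Pw')'(L)=f_2(L)=f_3=-w'(L)$ by $f\in\H$ together with the third-component equation. Hence $\ran A=\H$. For injectivity one repeats the construction with $f=0$: then $c_1=0$, so $w'\equiv0$ and $w$ is constant, while $\theta_3 w(0)=0$ with $\theta_3\neq0$ forces $w\equiv0$, i.e.\ $z=0$. Since every step of the construction is forced, the preimage is unique, so $A^{-1}$ is well defined on all of $\H$. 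The computations are routine; the only slightly delicate point is the Sobolev bookkeeping that promotes $w$ to $H^3(0,L)$, which relies on $H^2(0,L)$ being a Banach algebra in one dimension and on $P^{-1}\in H^2(0,L)$ (a consequence of $P\in H^2(0,L)$ and $P\ge P^0>0$); I do not anticipate any genuine obstacle.
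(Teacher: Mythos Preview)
Your proof is correct and follows essentially the same route as the paper's: solve $v=f_1$, integrate $(Pw')'=f_2$ once using the boundary datum at $x=L$, then use the fourth equation together with $\theta_3\neq0$ to pin down $w(0)$, and finally integrate again. The paper's version is slightly terser (it does not spell out the verification that $z\in D(A)$ or the separate injectivity check), but the argument is the same.
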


\begin{proof}
We prove this lemma by showing that the equation $Az=(f,g,g(L),g(0))$ has a unique solution $z\in D(A)$ for every
$(f,g,g(L),g(0))\in\H.$ This equation reads in detail:
\begin{equation}\label{inv:1}
\left[\begin{array}{c}v\\(Pw')'\\- w'(L)\\\theta_1v(0)+\theta_2 v'(0)+\theta_3w(0)+\theta_4 w'(0)\end{array}\right]
=\left[\begin{array}{c}f\\g\\g(L)\\g(0)\end{array}\right].
\end{equation}
{}From the first line we immediately find $v=f\in H^2(0,L)$, which also fixes the values $v(0)$ and $ v'(0)$. After integration of the second line we obtain
\begin{equation}\label{dw}
 w'(x)=-\frac{P(L)}{P(x)} g(L)+\frac{1}{P(x)}\int_L^xg(y)\d y,
\end{equation}
where we used $w'(L)=-g(L)$ from the third line. Since $1/P\in H^2(0,L)$ and $g\in H^1(0,L)$, we find $w'\in H^2(0,L)$.
This equation also determines $w'(0)$. In combination with the already known values $v(0)$ and $ v'(0)$, we obtain $w(0)$ from the fourth line in (\ref{inv:1}), since $\theta_3\neq 0$. Hence, $w(x)$ is uniquely determined as:
\begin{equation}\label{w}
w(x)=w(0)-\int_0^x\frac{P(L)}{P(y)} g(L)\d y+\int_0^x\frac{1}{P(y)}\int_L^yg(\zeta)\d \zeta\d y.
\end{equation}
All integrals exist, since $P(x)>0$ holds uniformly. Finally, $w\in H^3(0,L)$ holds. Thus, the
inverse $A^{-1}$ exists and is defined on $\H$.
\end{proof}

\begin{lem}\label{lem:compact}
If $\theta_3\neq 0$, the operator $A^{-1}$ is compact.
\end{lem}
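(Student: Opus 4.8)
The idea is the standard mechanism by which an evolution operator acquires a compact resolvent: $A^{-1}$ gains regularity, and the higher-order Sobolev embeddings on the bounded interval $(0,L)$ are compact. Concretely, the plan is to show that $A^{-1}$ is bounded as a map from $\H$ into $X:=H^3(0,L)\times H^2(0,L)\times\C\times\C$ (with its product norm), and then to note that the canonical injection $X\hookrightarrow\H$ is compact; since $A^{-1}\colon\H\to\H$ then factors as a bounded operator followed by a compact one, it is compact.

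For the boundedness of $A^{-1}\colon\H\to X$ I would use the explicit formulas derived in the proof of Lemma~\ref{lem:inv}. Writing $A^{-1}(f,g,g(L),g(0))=(w,v,v(L),v(0))$, we have $v=f$, hence $\|v\|_{H^2}=\|f\|_{H^2}\le\|z\|$. By \eqref{dw}, $w'=h/P$ with $h(x):=-P(L)g(L)+\int_L^x g(y)\d y$; since $g\in H^1(0,L)$ we have $h\in H^2(0,L)$ with $\|h\|_{H^2}\le C\|g\|_{H^1}$ (using the trace bound $|g(L)|\le C\|g\|_{H^1}$), and since $P\in H^2(0,L)$ with $P\ge P^0>0$ we have $1/P\in H^2(0,L)$; as $H^2(0,L)$ is a Banach algebra this yields $\|w'\|_{H^2}\le C\|g\|_{H^1}\le C\|z\|$. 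Finally the fourth line of \eqref{inv:1}, solvable for $w(0)$ precisely because $\theta_3\neq0$, gives $w(0)=\theta_3^{-1}\bigl(g(0)-\theta_1 f(0)-\theta_2 f'(0)-\theta_4 w'(0)\bigr)$, and the embedding $H^2(0,L)\hookrightarrow C^1[0,L]$ controls the point values $f(0),f'(0),w'(0)$ by $\|z\|$, so $|w(0)|\le C\|z\|$ and therefore $\|w\|_{H^3}\le C\bigl(|w(0)|+\|w'\|_{H^2}\bigr)\le C\|z\|$. The scalar slots are controlled by $|v(0)|+|v(L)|\le C\|v\|_{H^1}\le C\|z\|$. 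Collecting these estimates proves $\|A^{-1}z\|_X\le C\|z\|$.

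It then remains to recall why $X\hookrightarrow\H$ is compact: given a bounded sequence in $X$, the Rellich--Kondrachov theorem provides a subsequence whose first component converges in $H^2(0,L)$ and whose second component converges in $H^1(0,L)$; by continuity of the trace maps $u\mapsto u(0)$ and $u\mapsto u(L)$ on $H^1(0,L)$, the scalar components converge as well, and the limit still satisfies $\xi=v(L)$ and $\psi=v(0)$, hence lies in $\H$. Thus $A^{-1}$ maps bounded subsets of $\H$ onto relatively compact subsets of $\H$, i.e.~$A^{-1}$ is compact (and, since the two norms on $\H$ are equivalent by Lemma~\ref{equivalence}, this conclusion is independent of the choice of norm).

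There is no genuine difficulty here; the only points requiring a little care are that the extra differentiation of \eqref{dw} leading to $w'''$ remains in $L^2(0,L)$ — which is exactly where the hypothesis $P\in H^2(0,L)$ (so that $P''\in L^2$, hence $(1/P)''\in L^2$) enters — and the bookkeeping of the hypothesis $\theta_3\neq0$ when recovering $w(0)$ from the boundary relation.
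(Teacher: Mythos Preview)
Your proof is correct and follows essentially the same approach as the paper: bound $A^{-1}$ as a map from $\H$ into the higher-regularity space $H^3\times H^2\times\C^2$ and then invoke the compact Sobolev embeddings $H^3\subset\subset H^2\subset\subset H^1$. The only cosmetic difference is that you obtain $\|w'\|_{H^2}\le C\|g\|_{H^1}$ in one stroke via the Banach-algebra property of $H^2(0,L)$, whereas the paper estimates $\|w'\|_{L^2}$, $\|w''\|_{L^2}$, and $\|w'''\|_{L^2}$ separately from the identities $Pw''=g-P'w'$ and $(Pw')''=g'$.
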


\begin{proof}
We show that for $(f,g,g(L),g(0))\in\H$ the norm of $z=A^{-1}(f,g,g(L),g(0))$ in $\mathcal J:=H^3(0,L)\times
H^2(0,L)\times \C^2$ is uniformly bounded by $\|(f,g,g(L),g(0))\|_\H$.\\ Due to the continuous embedding
$H^1(0,L)\hookrightarrow C[0,L]$ in one dimension \citep[see, e.g.,][]{ad}, we have the estimates $|g(L)|,\,|g(0)|\le C
\|g\|_{H^1}$. Here and in the sequel, $C$ denotes positive, not necessarily equal constants. From the third line in (\ref{inv:1}) we therefore get $| w'(L)|\le C \|g\|_{H^1}$. With this and
(\ref{dw}) we find the estimate
\begin{equation}\label{absdw}
\| w'\|_{L^2}\le C \|g\|_{H^1}.
\end{equation}
Next we will apply this result to the identity $Pw''=g-P' w'$, which is obtained from the second line in (\ref{inv:1}), and use $ P'\in L^\infty(0,L)$ and $P(x)\ge P^0>0$ (with $P^0$ introduced right after \eqref{hamilton}). This yields
\begin{equation}\label{d2w}
\| w''\|_{L^2}\le C \|g\|_{H^1}.
\end{equation}
Similarly, from $(P w')''= g'$ we obtain the estimate
\begin{equation}\label{d3w}
\| w'''\|_{L^2}\le C  \|g\|_{H^1}.
\end{equation}
For $v$ we immediately get $\|v\|_{H^2}=\|f\|_{H^2}$ using the first line in (\ref{inv:1}). Due to the continuous embedding $H^k(0,L)\hookrightarrow C^{k-1}[0,L]$ in one dimension \citep[cf.][]{ad}, we find the following estimates
\begin{align}
	|v(0)|&\le  C \|f\|_{H^2},\label{v0}\\
	| v'(0)|&\le C \|f\|_{H^2}.\label{dv0}
\end{align}
Using the above estimate for $ w'(L)$ and (\ref{dw}) we obtain
\begin{equation}\label{dw0}
| w'(0)|\le C \|g\|_{H^1}.
\end{equation}
Applying (\ref{v0}), (\ref{dv0}), (\ref{dw0}) to the fourth line of (\ref{inv:1}) and using $|g(0)|\le C\|g\|_{H^1}$ yields
\begin{equation}\label{w0}|w(0)|^2\le C (\|f\|^2_{H^2}+\|g\|^2_{H^1}).\end{equation}
Altogether, we get
\begin{equation}\label{estw}
\|w\|_{H^3}^2\le C (\|f\|^2_{H^2}+\|g\|^2_{H^1}).
\end{equation}
Thus, we have $\|w\|_{H^3}^2+\|v\|^2_{H^2}\le C(\|f\|^2_{H^2}+\|g\|^2_{H^1})$, which shows that $A^{-1}$ maps bounded sets in $\H$ into bounded sets in $\mathcal J$. Since the embeddings $H^3(0,L)\subset\subset H^2(0,L)\subset\subset H^1(0,L)$ are compact, $A^{-1}$ is a compact operator.
\end{proof}

From the previous lemma we know that $A^{-1}$ is a closed operator, therefore we have:
\begin{cor}\label{cor:1}
For $\theta_3\neq 0$, the operator $A$ is closed, and $0\in\rho(A)$, the resolvent set of $A$.
\end{cor}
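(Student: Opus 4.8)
The plan is to deduce everything from the two preceding lemmas together with elementary facts about graphs of operators. First I would record that $A^{-1}$ is bounded: by Lemma~\ref{lem:compact} it is compact, and by Lemma~\ref{lem:inv} its domain is all of $\H$; a compact (indeed any bounded) operator defined on the whole space is bounded, hence closed, since its graph $\{(y,A^{-1}y):y\in\H\}$ is closed.

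Next I would transfer closedness from $A^{-1}$ to $A$. The graph of $A$, namely $\{(z,Az):z\in D(A)\}$, is carried onto the graph of $A^{-1}$ by the \emph{flip} homeomorphism $(u,v)\mapsto(v,u)$ of $\H\times\H$ --- this uses that, by Lemma~\ref{lem:inv}, $A$ is injective with range $\H$, so that the flipped graph of $A$ is precisely the graph of the single-valued, everywhere-defined operator $A^{-1}$. Since the flip is a homeomorphism and the graph of $A^{-1}$ is closed, the graph of $A$ is closed, i.e.~$A$ is a closed operator.

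Finally, $0\in\rho(A)$ follows directly from the definition of the resolvent set: $\rho(A)$ consists of those $\lambda\in\C$ for which $\lambda I-A:D(A)\to\H$ is bijective with bounded inverse. For $\lambda=0$, bijectivity is the content of Lemma~\ref{lem:inv} and boundedness of the inverse is the first step above; hence $0\in\rho(A)$.

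I do not expect any genuine obstacle here: the statement is a bookkeeping consequence of Lemmas~\ref{lem:inv} and~\ref{lem:compact}. The only point deserving a moment's care is that closedness of $A$ should be established before (or together with) the resolvent claim, since otherwise the phrase ``$0$ lies in the resolvent set'' is slightly ambiguous; the graph-flip argument settles this cleanly.
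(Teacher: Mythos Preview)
Your proposal is correct and follows essentially the same route as the paper: the paper simply notes that $A^{-1}$ is closed (being compact, hence bounded, on all of $\H$) and states the corollary as an immediate consequence. You have spelled out the graph-flip argument and the verification that $0\in\rho(A)$, which the paper leaves implicit, but there is no substantive difference in approach.
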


Now we turn to the application of the Lumer-Phillips theorem in order to prove Theorem \ref{trm:1}. To this end we shall prove the dissipativity of $A$ with respect to the inner product $\la \cdot,\cdot\ra_\H$.

\begin{lem}\label{lem:diss}
Let the assumptions of Theorem \ref{trm:1} hold, and let $\H$ be equipped with the inner product (\ref{inner_prod}), where we choose the coefficients
\begin{equation}\label{alphas}
  \alpha_1:=\frac{\theta_2}{2P(0)},\qquad \alpha_2:=-\frac{\theta_2\theta_3}{2\theta_4},
\end{equation}
and $\gamma>0$ is sufficiently small. Then the operator $A$ is dissipative in $\H$.
\end{lem}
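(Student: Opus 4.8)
The plan is to compute $\Re\la Az,z\rah$ for arbitrary $z\in D(A)$ and show that, with the choice of $\alpha_1,\alpha_2$ in \eqref{alphas} and $\gamma$ small, this quantity is non-positive. The guiding principle is that the inner product \eqref{inner_prod} was reverse-engineered precisely so that the higher-order boundary term $\theta_2 v'(0)=2\alpha_1P(0)v'(0)$ appearing in $F[w,v]$ gets absorbed into the last ``square'' term $\tfrac12|\psi-2\alpha_1P(0)w'(0)+2\alpha_2 w(0)|^2$, so that no uncontrolled boundary contribution survives.

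First I would write $Az=(v,(Pw')',-w'(L),F[w,v])$ and substitute into each of the six groups of terms in \eqref{inner_prod}, pairing $z_1=Az$ with $z_2=z$. Taking real parts, the volume integrals $\alpha_1\gamma\int_0^L (P(Pw')')' (P\bar w')'\,dx$ type terms and $\alpha_1\int_0^L P v'\cdot(\text{stuff})$ etc.\ should be integrated by parts; the idea is that most volume contributions either cancel against each other (the skew-symmetric ``transport'' structure $w\leftrightarrow v$) or reduce to boundary terms at $x=0$ and $x=L$. Concretely: $\Re\la v,w\ra$-type pairings combine with $\Re\la (Pw')',v\ra$-type pairings, and after integration by parts the interior integrals telescope, leaving boundary evaluations at $0$ and $L$. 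At $x=L$ one uses the domain condition $(Pw')'(L)=-w'(L)$ together with $\xi=v(L)$ and the term $\alpha_1 P(L)\xi_1\bar\xi_2$ and $\alpha_1\gamma P(L)w_1'(L)\bar w_2'(L)$ to check that the $x=L$ boundary contributions cancel. At $x=0$ one uses $(Pw')'(0)=F[w,v]=\theta_1 v(0)+\theta_2 v'(0)+\theta_3 w(0)+\theta_4 w'(0)$ and $\psi=v(0)$; here the derivative-of-the-square term produces $\Re\big[(\,\psi-2\alpha_1P(0)w'(0)+2\alpha_2 w(0)\,)\overline{(\dot\psi-2\alpha_1P(0)\dot w'(0)+2\alpha_2\dot w(0))}\big]$ with $\dot\psi=F[w,v]$, $\dot w=v$, $\dot w'=v'$, and the miracle is that the combination $-2\alpha_1P(0)v'(0)=-\theta_2 v'(0)$ exactly kills the $\theta_2 v'(0)$ inside $F[w,v]$.

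After all cancellations I expect $\Re\la Az,z\rah$ to reduce to a quadratic form in the finitely many boundary quantities $v(0)$, $w(0)$, $w'(0)$ (the value $v'(0)$ having disappeared), plus a small ``leftover'' interior term of size $O(\gamma)$ coming from the $\gamma$-weighted parts of \eqref{inner_prod} that did not fully cancel. The hypotheses \eqref{cond:theta} together with $(a+b-1)^2<4ab$ and the sign conditions $\theta_1,\theta_3<0$, $\theta_2,\theta_4>0$ are exactly what is needed to make the $\gamma=0$ boundary quadratic form negative semidefinite: substituting $\theta_1=\theta_3/b-a$, $\theta_2=\theta_4/b$ and using $\alpha_2=-\theta_2\theta_3/(2\theta_4)=-\theta_3/(2b)>0$, the $2\times 2$ (or $3\times 3$) matrix of the form should have nonnegative determinant precisely under $(a+b-1)^2\le 4ab$, with the relevant diagonal entries having the right sign because $\theta_1<0$. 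Then one invokes continuity in $\gamma$: since the $\gamma=0$ form is negative semidefinite with a strictly negative ``energy-dissipation'' direction, and the $O(\gamma)$ perturbation is controlled, for $\gamma>0$ small enough $\Re\la Az,z\rah\le 0$ for all $z\in D(A)$.

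The main obstacle is the bookkeeping in the second step: correctly tracking every boundary term generated by integration by parts in the six groups of \eqref{inner_prod} and verifying that the $x=L$ terms cancel outright while the $x=0$ terms assemble into a single quadratic form with $v'(0)$ eliminated. A secondary, more conceptual difficulty is identifying the sharp algebraic condition on $(a,b)$: one must show that the determinant (or leading minors) of the resulting boundary quadratic form is governed exactly by the quantity $4ab-(a+b-1)^2$, which requires carefully exploiting the specific definitions $\alpha_1=\theta_2/(2P(0))$, $\alpha_2=-\theta_2\theta_3/(2\theta_4)$ and the coupling \eqref{cond:theta} — this is presumably where the somewhat mysterious parabola condition in Theorem~\ref{trm:1} originates. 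Once dissipativity is in hand, Theorem~\ref{trm:1} follows from Corollary~\ref{cor:1} (so $0\in\rho(A)$, hence the range condition $\ran(A-\lambda_0)=\H$ of Lumer--Phillips is met) together with Lemma~\ref{lem:dense}, via the Lumer--Phillips theorem, and Lemma~\ref{equivalence} transfers the conclusion back to the natural norm.
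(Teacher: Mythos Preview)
Your overall strategy matches the paper's: compute $\Re\la z,Az\rah$, integrate by parts so that all interior integrals cancel, check that the $x=L$ boundary terms cancel exactly, and analyze the remaining $x=0$ boundary quadratic form. But two points in your plan are off and would derail the bookkeeping.

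First, there is \emph{no} leftover interior $O(\gamma)$ term. The $\gamma$-weighted volume integrals in \eqref{inner_prod} cancel just as cleanly as the unweighted ones; after integration by parts one is left with
\[
\Re\la z,Az\rah=\tfrac12\Re\Big[\bar v(0)J(w)+\gamma\,\bar F\,J(v)+\big(v(0)+J(w)\big)\big(\bar F+J(\bar v)\big)\Big],
\]
where $J(u):=-2\alpha_1P(0)u'(0)+2\alpha_2u(0)$. The entire $\gamma$-dependence sits in the \emph{boundary} term $\gamma\,\bar F\,J(v)$, not in any bulk remainder.

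Second, $v'(0)$ does \emph{not} disappear. Your observation that $F+J(v)$ is free of $v'(0)$ is correct and is exactly the cancellation in the last factor above; but $v'(0)$ survives inside $J(v)$ (and inside $F$) in the $\gamma$-term. So the boundary form is genuinely a form in the three quantities $v(0),\,J(w),\,J(v)$, not just in $v(0),w(0),w'(0)$. The paper exploits this by using \eqref{cond:theta} and \eqref{alphas} to write $F=-a\,v(0)-b\,J(w)-J(v)$, then rescales to $y_1=\sqrt a\,v(0)$, $y_2=\sqrt b\,J(w)$, $y_3=\sqrt\gamma\,J(v)$ and applies the $3\times3$ Sylvester criterion (Lemma~\ref{lem:A:2}); the resulting conditions collapse, for $\gamma$ small, to the single strict inequality $(a+b-1)^2<4ab$.

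Your suggested ``$\gamma=0$ is negative semidefinite, then perturb'' argument is more delicate than you indicate: at $\gamma=0$ the form has a nontrivial kernel ($v(0)=J(w)=0$ with $J(v)$ free), so a generic $O(\gamma)$ perturbation need not preserve semidefiniteness. It works here precisely because the perturbation restricted to that kernel is $-\tfrac{\gamma}{2}|J(v)|^2\le0$, but the paper's direct $3\times3$ analysis sidesteps this issue entirely and is cleaner.
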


The proof is deferred to the Appendix~\ref{app:c}. Now, Theorem \ref{trm:1} follows directly from the above results:

\begin{proof}[Proof of Theorem \ref{trm:1}]
First we prove this result under the additional assumptions of Lemma \ref{lem:diss} (on $\alpha_1$, $\alpha_2$ and $\gamma$). Then $A$ is dissipative in $\H$ equipped with $\|\cdot\|_\H$, and Corollary \ref{cor:1} implies $0\in\rho(A)$. Since $\rho(A)$ is an open set, there exists some $\zeta\in\rho(A)$ with positive real part. So the requirements of the Lumer-Phillips theorem are fulfilled, and we obtain that $A$ generates a $C_0$-semigroup of contractions on $\H$ with respect to $\|\cdot\|_\H$.

Next we drop the additional assumptions of Lemma~\ref{lem:diss}, and consider an equivalent norm on $\H$. Then the semigroup $\{T(t)\}_{t\ge 0}$ is not necessarily a contraction semigroup any more, but still a $C_0$-semigroup of uniformly bounded operators.
\end{proof}

The following corollary follows as a consequence of Theorem \ref{trm:1}, due to elementary properties of generators of
$C_0$-semigroups of operators \citep[for more details, see][]{pazy}:

\begin{cor}
Under the assumptions of Theorem \ref{trm:1}, the initial value problem
\begin{equation}\label{ivp:1}
\begin{cases}
\dot z(t)=Az(t)\\
z(0)=z_0
\end{cases}
\end{equation}
has a unique mild solution $z(t):=T(t)z_0$ for all $z_0\in\H$, where $\{T(t)\}_{t\ge 0}$ is the $C_0$-semigroup
generated by $A$. If $z_0\in D(A)$, then $z(t)$ is continuously differentiable on $[0,\infty)$ and $z(t)\in D(A)$ for
all $t\ge 0$, and therefore is a classical solution. Furthermore, the norm $\|z(t)\|_\H$ remains bounded as $t\to\infty$.
\end{cor}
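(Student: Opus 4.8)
The plan is to read off all three assertions from Theorem~\ref{trm:1} together with the standard theory of $C_0$-semigroups \citep[see][]{pazy}. Since $A$ generates the $C_0$-semigroup $\{T(t)\}_{t\ge0}$, one sets $z(t):=T(t)z_0$; by strong continuity this $z$ lies in $C([0,\infty);\H)$, and it is by construction the mild solution of \eqref{ivp:1}. For uniqueness I would argue in the usual way: every mild solution of the homogeneous problem is the limit, uniform on compact time intervals, of classical solutions with data $z_0^{(n)}\in D(A)$, $z_0^{(n)}\to z_0$, and since each such classical solution equals $T(\cdot)z_0^{(n)}$, passing to the limit forces any mild solution to coincide with $T(\cdot)z_0$.

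For $z_0\in D(A)$, I would invoke the regularity of orbits through the domain of a generator: $t\mapsto T(t)z_0$ maps $[0,\infty)$ into $D(A)$, is of class $C^1$, and satisfies $\frac{\mathrm{d}}{\mathrm{d}t}T(t)z_0=AT(t)z_0=T(t)Az_0$ \citep[][Theorem~1.2.4]{pazy}. Hence $z(t)=T(t)z_0$ is a classical solution of \eqref{ivp:1} on $[0,\infty)$. Its uniqueness among classical solutions is immediate, since the difference of two classical solutions is a classical---hence mild---solution with zero initial value and therefore vanishes identically.

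Finally, ``uniformly bounded'' in Theorem~\ref{trm:1} means precisely $M:=\sup_{t\ge0}\|T(t)\|<\infty$, whence $\|z(t)\|_\H=\|T(t)z_0\|_\H\le M\|z_0\|_\H$ for all $t\ge0$; with the inner product of Lemma~\ref{lem:diss} one even has $M=1$ by contractivity, and for the natural norm the bound then follows via the equivalence in Lemma~\ref{equivalence}. I do not expect any genuine obstacle here: the substantive work---dissipativity, density of $D(A)$, bijectivity of $A$---was already carried out in the proof of Theorem~\ref{trm:1}, and the only points that need a word of care are the compatibility of the notions of mild and classical solution and the observation that a bound in one equivalent norm transfers to every equivalent norm.
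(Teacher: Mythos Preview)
Your proposal is correct and follows the same route as the paper: the corollary is stated there without a detailed proof, with only the remark that it ``follows as a consequence of Theorem~\ref{trm:1}, due to elementary properties of generators of $C_0$-semigroups of operators \citep[for more details, see][]{pazy}.'' You have simply spelled out those elementary properties, which is perfectly appropriate.
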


%\begin{rem}\label{compare}
%The particular controller developed in \cite{TWK06} satisfies the conditions for $\theta_1,\ldots,\theta_4$ in Theorem
%\ref{trm:1} with $b=a+1$. With this identity, the condition $(a+b-1)^2<4ab$ clearly holds, and therefore the controller
%generates a $C_0$-semigroup of uniformly bounded operators.
%\end{rem}

%%%%%%%%%%%%%%%%%%%%%%%%%%%%%%%%%%%%%%%%%%%%%%%%%%%%%%%%%%%%%%%%%%%%%%%%%%%%%%%%%%%

\bigskip

\section{Asymptotic Stability}\label{sec:3}

After having shown that the norm of every solution of the initial value problem (\ref{ivp:1}) is uniformly bounded with respect to $t\ge0$, we now prove that the norm even tends to zero as $t\to\infty$, i.e.~the $C_0$-semigroup $\{T(t)\}_{t\ge 0}$ generated by $A$ is asymptotically stable, by applying the following theorem \cite[see][Theorem 3.26]{lgm}:

\begin{trm}\label{trm:3.26}
Let $\{S(t)\}_{t\ge 0}$ be a uniformly bounded $C_0$-semigroup in a Banach space $X$ with generator $\mathcal A$, and
assume that the resolvent $R(\lambda,\mathcal A)$ is compact for some $\lambda\in\rho(\mathcal A)$. Then $\{S(t)\}_{t\ge
0}$ is asymptotically stable if and only if $\Re \lambda <0$ for all $\lambda\in\sigma(\mathcal A)$.
\end{trm}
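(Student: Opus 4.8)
The plan is to prove the two implications separately: the forward direction is routine semigroup bookkeeping, while the converse rests on a Tauberian-type stability theorem that I would isolate as the key ingredient. For the \emph{``only if''} part, recall that since $R(\lambda,\mathcal A)$ is compact for one (hence, by the resolvent identity, for every) $\lambda\in\rho(\mathcal A)$, the spectrum $\sigma(\mathcal A)$ is pure point, consisting of isolated eigenvalues of finite algebraic multiplicity with no finite accumulation point. Thus, given $\lambda\in\sigma(\mathcal A)$, choose an eigenvector $x\neq 0$ with $\mathcal A x=\lambda x$; then $S(t)x=\e^{\lambda t}x$, so $\|S(t)x\|=\e^{(\Re\lambda)t}\|x\|$. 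Asymptotic stability forces $\|S(t)x\|\to0$ as $t\to\infty$, hence $\Re\lambda<0$. Since $\lambda\in\sigma(\mathcal A)$ was arbitrary, this direction is complete.

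For the \emph{``if''} part, assume $\Re\lambda<0$ for all $\lambda\in\sigma(\mathcal A)$. Because the resolvent is compact, the spectrum is discrete and pure point, so this assumption is equivalent to $\sigma(\mathcal A)\cap\i\R=\emptyset$, i.e.\ $\i\R\subset\rho(\mathcal A)$. (Uniform boundedness of $\{S(t)\}$ additionally yields $\sigma(\mathcal A)\subseteq\{\zeta:\Re\zeta\le0\}$ via the Laplace representation $R(\mu,\mathcal A)=\int_0^\infty\e^{-\mu t}S(t)\d t$ for $\Re\mu>0$, although this is not needed here.) In particular the boundary spectrum $\sigma(\mathcal A)\cap\i\R$ is empty, hence trivially countable, and since for the Banach-space adjoint $\sigma(\mathcal A^*)=\sigma(\mathcal A)$, we also have $\sigma_p(\mathcal A^*)\cap\i\R\subseteq\sigma(\mathcal A)\cap\i\R=\emptyset$. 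The Arendt--Batty--Lyubich--V\~{u} stability theorem --- a bounded $C_0$-semigroup whose generator has countable boundary spectrum and whose adjoint has no eigenvalue on $\i\R$ is asymptotically stable --- then gives $\|S(t)x\|\to0$ for every $x\in X$.

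The genuine difficulty lies entirely in the converse implication. Compactness of the resolvent makes $\sigma(\mathcal A)$ discrete and pure point, but it does \emph{not}, in general, provide a complete system of (generalized) eigenvectors of $\mathcal A$ spanning $X$; one therefore cannot simply diagonalize the semigroup and read the decay off from the eigenvalue locations. Closing this gap requires a Tauberian argument, which is precisely what the Arendt--Batty--Lyubich--V\~{u} theorem supplies; I would accordingly state that theorem explicitly as the central lemma and treat the rest --- spectral discreteness from the compact resolvent, the identity $S(t)x=\e^{\lambda t}x$ on eigenvectors, and the half-plane location of $\sigma(\mathcal A)$ --- as short standard verifications.
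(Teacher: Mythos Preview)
The paper does not prove this theorem at all: it is quoted verbatim as Theorem~3.26 from the monograph \cite{lgm} and used as a black box in the proof of Theorem~\ref{lem:3.3}. There is therefore no ``paper's own proof'' to compare your proposal against.

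That said, your argument is correct and is essentially the standard route to this result. The forward direction is fine: compactness of the resolvent makes $\sigma(\mathcal A)$ pure point, and the orbit of an eigenvector exhibits the required growth rate. For the converse you correctly identify the Arendt--Batty--Lyubich--V\~u theorem as the key ingredient; with $\sigma(\mathcal A)\cap\i\R=\emptyset$ both hypotheses (countable boundary spectrum and $\sigma_p(\mathcal A^*)\cap\i\R=\emptyset$) are trivially satisfied, and strong stability follows. Your closing remark that compactness of the resolvent alone does not furnish a spectral basis, so a genuine Tauberian step is unavoidable, is also well placed. This is, in fact, how the result is proved in \cite{lgm}.
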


\begin{rem}
The compactness of the resolvent $R(\lambda,\mathcal A)$ for one $\lambda\in\rho(\mathcal A)$ already implies its compactness for all $\lambda\in\rho(\mathcal A)$, \cite[cf.][Theorem~III.6.29]{kato}.
\end{rem}

\begin{trm}\label{lem:3.3}
Let the assumptions of Theorem \ref{trm:1} hold. Then the $C_0$-semigroup $\{T(t)\}_{t\ge 0}$ generated by $A$ is asymptotically stable.
\end{trm}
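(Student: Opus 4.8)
The plan is to verify the hypotheses of Theorem~\ref{trm:3.26} with $\mathcal A=A$ and $X=\H$, and then to locate the spectrum. By Corollary~\ref{cor:1} we have $0\in\rho(A)$, and by Lemma~\ref{lem:compact} the resolvent $R(0,A)=A^{-1}$ is compact; by the remark following Theorem~\ref{trm:3.26} this forces $R(\lambda,A)$ to be compact for every $\lambda\in\rho(A)$, so $\sigma(A)$ consists entirely of isolated eigenvalues of finite algebraic multiplicity. Since, by Theorem~\ref{trm:1}, $\{T(t)\}_{t\ge0}$ is a uniformly bounded $C_0$-semigroup, the Hille--Yosida theorem gives $\sigma(A)\subseteq\{\zeta\in\C:\Re\zeta\le0\}$. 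Hence it remains to show that $\i\R$ contains no eigenvalue; since $0\in\rho(A)$, this amounts to proving that for every $\lambda\in\R\setminus\{0\}$ the equation $Ay=\i\lambda y$ admits only $y=0$. Theorem~\ref{trm:3.26} then yields $\Re\zeta<0$ for all $\zeta\in\sigma(A)$, i.e.\ asymptotic stability.

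To handle this key step, I would take $y=(w,v,\xi,\psi)\in D(A)$ with $Ay=\i\lambda y$, $\lambda\neq0$, and expand \eqref{de:a} componentwise: $v=\i\lambda w$, $(Pw')'=\i\lambda v=-\lambda^2 w$, $-w'(L)=\i\lambda\xi=-\lambda^2 w(L)$, and $\theta_1 v(0)+\theta_2 v'(0)+\theta_3 w(0)+\theta_4 w'(0)=\i\lambda\psi=-\lambda^2 w(0)$. Since $A$ is dissipative with respect to $\la\cdot,\cdot\rah$ (Lemma~\ref{lem:diss}), one has $\Re\la Ay,y\rah\le0$; but $\la Ay,y\rah=\la\i\lambda y,y\rah=\i\lambda\|y\|_\H^2$ has zero real part, so the equality case of the dissipativity estimate obtained in the proof of Lemma~\ref{lem:diss} (Appendix~\ref{app:c}) must hold. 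That estimate bounds $\Re\la Ay,y\rah$ above by a strictly negative multiple of $|\psi|^2=|v(0)|^2$ (plus further nonpositive contributions), and therefore forces $v(0)=0$. Because $v=\i\lambda w$ with $\lambda\neq0$, this gives $w(0)=0$, and also $v'(0)=\i\lambda w'(0)$.

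The remainder is an ODE-uniqueness argument. Inserting $v(0)=w(0)=0$ and $v'(0)=\i\lambda w'(0)$ into the fourth relation above yields $(\theta_4+\i\lambda\theta_2)\,w'(0)=0$; since $\theta_2,\theta_4>0$ and $\lambda\in\R$, the real part of $\theta_4+\i\lambda\theta_2$ equals $\theta_4>0$, so $\theta_4+\i\lambda\theta_2\neq0$ and hence $w'(0)=0$. Thus $w$ solves the linear second-order equation $(Pw')'=-\lambda^2 w$ on $(0,L)$ — equivalently $w''=-(P'/P)\,w'-(\lambda^2/P)\,w$ with continuous coefficients, as $P\in H^2(0,L)\hookrightarrow C^1[0,L]$ and $P\ge P^0>0$ — with Cauchy data $w(0)=w'(0)=0$. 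By the uniqueness theorem for linear ODEs, $w\equiv0$ on $[0,L]$, whence $v=\i\lambda w\equiv0$, $\xi=v(L)=0$ and $\psi=v(0)=0$, i.e.\ $y=0$. Consequently $\i\R\cap\sigma(A)=\emptyset$ and the theorem follows from Theorem~\ref{trm:3.26}.

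I expect the only genuinely delicate point to be the extraction of $v(0)=0$ from the equality case of the dissipativity estimate, which is why a careful bookkeeping of the boundary terms in the proof of Lemma~\ref{lem:diss} is essential; everything else (componentwise expansion of the eigenvalue equation, the algebraic elimination giving $w'(0)=0$, and the ODE uniqueness) is routine. Should the dissipation functional only control a combination of $v(0)$ with other boundary values rather than $|v(0)|^2$ directly, the same conclusion can still be reached by additionally using the relation $w'(L)=\lambda^2 w(L)$ together with the domain condition $(Pw')'(L)=-w'(L)$.
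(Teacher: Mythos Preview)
Your overall strategy is correct and the proof goes through, but it differs from the paper's argument and your description of the dissipativity estimate needs a small correction.

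The paper does \emph{not} invoke the equality case of Lemma~\ref{lem:diss}. Instead it reduces $Ay=\i\tau y$ to the scalar BVP $(Pw')'+\tau^2w=0$ with $w'(0)=c_0w(0)$ and $w'(L)=\tau^2w(L)$, observes that under \eqref{cond:theta} the constant $c_0=-(\theta_3+\tau^2+\i\tau\theta_1)/(\theta_4+\i\tau\theta_2)$ is nonreal, multiplies the ODE by $\bar w$ and integrates by parts. The left-hand side of the resulting identity is real (thanks to the boundary condition at $x=L$), while the right-hand side equals $P(0)c_0|w(0)|^2$, which is nonreal unless $w(0)=0$. This forces $w(0)=w'(0)=0$, and ODE uniqueness finishes the argument. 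This route is self-contained: it never unpacks the quadratic form in Appendix~\ref{app:c} and uses only the sign/structural information $\theta_2,\theta_4>0$ together with \eqref{cond:theta}.

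Your approach via the equality case of dissipativity is also valid, but your description of the estimate is not quite accurate. In Appendix~\ref{app:c} one has $\Re\la z,Az\rah=\tfrac12\big(P_3(\Re y_1,\Re y_2,\Re y_3)+P_3(\Im y_1,\Im y_2,\Im y_3)\big)$ with $y_1=\sqrt{a}\,v(0)$, $y_2=\sqrt{b}\,J(w)$, $y_3=\sqrt{\gamma}\,J(v)$, where $P_3$ is a quadratic form with genuine cross terms; it is not of the shape ``$-c|v(0)|^2$ plus further nonpositive pieces''. What saves you is that the \emph{strict} hypothesis $(a+b-1)^2<4ab$ of Theorem~\ref{trm:1}, together with $\gamma>0$ sufficiently small as in Lemma~\ref{lem:diss}, makes the Sylvester inequalities in Lemma~\ref{lem:A:2} strict, so $-P_3$ is positive \emph{definite}. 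Hence $\Re\la Ay,y\rah=0$ forces $v(0)=J(w)=J(v)=0$ simultaneously, and in particular $v(0)=0$. From there your deduction $w(0)=0$, then $(\theta_4+\i\lambda\theta_2)w'(0)=0\Rightarrow w'(0)=0$, then $w\equiv0$ by ODE uniqueness, is correct. Your fallback involving the conditions at $x=L$ is unnecessary. In short: both proofs work; the paper's is more direct and avoids revisiting Appendix~\ref{app:c}, while yours exploits the strict definiteness hidden in the choice of $\gamma$ and the strict inequality in Theorem~\ref{trm:1}.
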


\begin{proof}
According to Lemma \ref{lem:compact} the operator $A$ has compact resolvent, and the associated semigroup $\{T(t)\}_{t\ge 0}$ is uniformly bounded due to Theorem \ref{trm:1}. As a consequence of the Hille-Yosida theorem \cite[see][Corollary~1.3.6]{pazy}, this implies $\sigma(A)\subseteq\{ \lambda\in \C:\Re \lambda\le 0\}$. Hence, in order to apply Theorem \ref{trm:3.26}, it remains to prove that $\i\R\subset \rho (A)$. Since the resolvent is compact, $\sigma(A)$ consists only of eigenvalues. Thus, it is sufficient to show that $A-\i\tau$ is injective for all $\tau \in \R$, that is to show that the system
\begin{equation}\label{inj:1}
\left[\begin{array}{c}v-\i\tau w\\(Pw')'-\i\tau v\\- w'(L)-\i \tau v(L)\\\theta_1v(0)+\theta_2
v'(0)+\theta_3w(0)+\theta_4 w'(0)-\i\tau v(0)\end{array}\right]=0
\end{equation}
only has the trivial solution in $D(A)$. We can rewrite this system in terms of the following equivalent boundary value problem for $w\in H^3(0,L)\hookrightarrow C^2[0,L]:$
\begin{subequations}\label{123r}
\begin{align}
	 (P w')'+ \tau ^2w&=0,\quad\qquad x\in(0,L),\label{bvp:1}\\
	 w'(0)&=c_0w(0),\label{bc:1}\\
	 w'(L)&=c_Lw(L),\label{bc:2}
\end{align}
\end{subequations}
where $c_0:=-\frac{\theta_3+\tau^2+\i\tau \theta_1}{\theta_4+\i\tau\theta_2} $ and $c_L:=\tau^2$. It is important to note that the conditions (\ref{cond:theta}) on the $\theta_i$ imply that $c_0\notin\R$ for all $\tau \in\R$. We now multiply equation (\ref{bvp:1}) by the complex conjugate $\bar w$ and integrate by parts, which yields
\[-\int_0^LP| w'|^2\d x+\tau^2\|w\|^2_{L^2}+P(L) w'(L)\bar w(L)=P(0) w'(0)\bar w(0).\]
Due to the boundary conditions (\ref{bc:1}) and (\ref{bc:2}) the left hand side of above identity is real, but the right hand side is either non-real or zero. Thus $ w'(0)\bar w(0)=0$, and (\ref{bc:1}) implies that $w(0)= w'(0)=0$. Therefore, every solution of the boundary value problem (\ref{123r}) also satisfies the initial value problem
\begin{align*}
	 (P w')'+ \tau ^2w&=0,\qquad x\in(0,L),\\
    w(0)&=0,\\
	 w'(0)&=0.
\end{align*}
Hence, $w\equiv0$, and this shows that $A-\i\tau$ is injective for all $\tau\in\R$.
\end{proof}

%%%%%%%%%%%%%%%%%%%%%%%%%%%%%%%%%%%%%%%%%%%%%%%%%%%%%%%%%%%%%%%%%%%%%%%%%%%%%%%%%%%%%%%%%%%%%%%

\bigskip

\section{Exponential Stability}\label{sec:4}

Here we show an even stronger result, namely the exponential stability of the semigroup $\{T(t)\}_{t\ge 0}$, i.e.~we prove that every solution of the initial value problem (\ref{ivp:1}) tends to zero exponentially. We follow a strategy similar to the one applied by \cite{Morgul2001}.

\begin{de}[Exponential stability]
A $C_0$-semigroup $\{S(t)\}_{t\ge0}$ is said to be {\em exponentially stable} if there exist constants $M\ge1$ and
$\omega>0$ such that $\|S(t)\|\le M\exp(-\omega t)$ for all $t\ge 0$.
\end{de}

To investigate exponential stability of a $C_0$-semigroup, we use the following theorem \cite[see][Corollary~3.36]{lgm}:

\begin{trm}[Huang]\label{huang}
Let $\{S(t)\}_{t\ge0}$ be a uniformly bounded $C_0$-semigroup in a Hilbert space, and let $\mathcal A$ be its generator. Then $\{S(t)\}_{t\ge0}$ is exponentially stable if and only if  $\i\R\subset \rho(\mathcal A)$ and
\begin{equation}\label{est:huang}
\sup_{\tau\in\R}\|R(\i\tau,\mathcal A)\|<\infty.
\end{equation}
\end{trm}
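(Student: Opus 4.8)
\medskip

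\noindent\textbf{Proof strategy for Theorem~\ref{huang}.}
Theorem~\ref{huang} is the Gearhart--Pr\"uss--Huang characterisation of exponential stability of semigroups on Hilbert spaces; since it is classical \citep[cf.][Corollary~3.36]{lgm}, see also \citep{ref:huang}, in the paper it is enough to quote it, and the substance of Section~\ref{sec:4} is the verification of its two hypotheses for our operator $A$ --- namely $\i\R\subset\rho(A)$ (which one expects to follow from the analysis behind Theorem~\ref{lem:3.3}) and the uniform resolvent bound $\sup_{\tau\in\R}\|R(\i\tau,A)\|<\infty$. For completeness we recall how the theorem itself is proved. The \emph{necessity} of the conditions is the easy half: if $\|S(t)\|\le M\e^{-\omega t}$ with $M\ge1$, $\omega>0$, then for every $\zeta$ with $\Re\zeta>-\omega$ the Bochner integral $\int_0^\infty\e^{-\zeta t}S(t)\d t$ converges in operator norm, represents $R(\zeta,\mathcal A)$, and obeys $\|R(\zeta,\mathcal A)\|\le M/(\Re\zeta+\omega)$; in particular $\i\R\subset\rho(\mathcal A)$ and the supremum in \eqref{est:huang} is at most $M/\omega$.

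For the \emph{sufficiency} direction I would argue in two steps. \emph{Step 1 (Plancherel in the Hilbert space).} Fix $s_0>0$ and $x$ in the state space, and set $M:=\sup_{t\ge0}\|S(t)\|$. The orbit $t\mapsto\e^{-s_0t}S(t)x$, extended by $0$ for $t<0$, lies in $L^1\cap L^2(\R)$ with values in the Hilbert space, and its Fourier transform is $\tau\mapsto R(s_0+\i\tau,\mathcal A)x$; Plancherel's identity then gives
\[\int_\R\|R(s_0+\i\tau,\mathcal A)x\|^2\d\tau=2\pi\int_0^\infty\e^{-2s_0t}\|S(t)x\|^2\d t\le\frac{\pi M^2}{s_0}\|x\|^2.\]
The resolvent identity $R(\i\tau,\mathcal A)=R(s_0+\i\tau,\mathcal A)+s_0R(\i\tau,\mathcal A)R(s_0+\i\tau,\mathcal A)$ together with $M_0:=\sup_{\tau\in\R}\|R(\i\tau,\mathcal A)\|<\infty$ yields $\|R(\i\tau,\mathcal A)x\|\le(1+s_0M_0)\|R(s_0+\i\tau,\mathcal A)x\|$, so $\tau\mapsto R(\i\tau,\mathcal A)x$ belongs to $L^2(\R)$ with norm $\le C\|x\|$. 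Inverting the Fourier transform (the standard mollification argument, letting $s_0\downarrow0$) identifies this $L^2$-function, on $(0,\infty)$, with $t\mapsto S(t)x$, whence $\int_0^\infty\|S(t)x\|^2\d t\le C'\|x\|^2$ for every $x$. \emph{Step 2 (Datko-type conclusion).} For every $x$ and every $t>0$, $\|S(t)x\|^2=\frac1t\int_0^t\|S(t-r)S(r)x\|^2\d r\le\frac{M^2}{t}\int_0^t\|S(r)x\|^2\d r\le\frac{M^2C'}{t}\|x\|^2$, hence $\|S(t)\|\to0$; choosing $t_0$ with $\|S(t_0)\|<1$ and iterating the semigroup law gives $\|S(t)\|\le M'\e^{-\omega t}$ for suitable $M'\ge1$, $\omega>0$.

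The main obstacle is Step 1: it is the only place where the Hilbert-space structure genuinely enters --- through Plancherel's identity --- and it cannot be dispensed with, the equivalence being false on general Banach spaces. The one technical point requiring care is the rigorous identification of the $L^2$-function $\tau\mapsto R(\i\tau,\mathcal A)x$ as the Fourier transform of the orbit $t\mapsto S(t)x$ on $(0,\infty)$, rather than merely a formal limit; this is handled by truncation/mollification and dominated convergence as $s_0\downarrow0$. Everything else --- the resolvent algebra and the Datko estimate --- is routine.
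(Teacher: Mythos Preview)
The paper does not prove Theorem~\ref{huang}; it is quoted from the literature \citep[Corollary~3.36]{lgm} and \citep{ref:huang} and then applied as a black box in the proof of Theorem~\ref{trm:exp}. You correctly identify this at the outset of your proposal, so in that sense there is nothing to compare.

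Your supplementary sketch is the standard Gearhart--Pr\"uss argument and is essentially correct. The one step you yourself flag as delicate --- the identification of $\tau\mapsto R(\i\tau,\mathcal A)x$ as the $L^2$-Fourier transform of $t\mapsto S(t)x\,\mathbf 1_{[0,\infty)}$ --- is indeed the only subtle point, and as written it is slightly circular: that identification presupposes $S(\cdot)x\in L^2(0,\infty)$, which is precisely what you are trying to prove. A cleaner route avoids this by never leaving $s_0>0$: use the resolvent identity together with a uniform bound on $\|R(s+\i\tau,\mathcal A)\|$ over the strip $0<s\le 1$ (obtained by combining the hypothesis $\|R(\i\tau,\mathcal A)\|\le M_0$ with a Neumann-series argument for small $s$ and the Hille--Yosida bound for larger $s$) to get $\|R(s_0+\i\tau,\mathcal A)x\|\le C\|R(1+\i\tau,\mathcal A)x\|$ with $C$ independent of $s_0\in(0,1]$; then the Plancherel identity $\int_0^\infty\e^{-2s_0 t}\|S(t)x\|^2\d t=\frac{1}{2\pi}\int_\R\|R(s_0+\i\tau,\mathcal A)x\|^2\d\tau$ has a right-hand side bounded uniformly in $s_0$, and monotone convergence as $s_0\downarrow 0$ gives $\int_0^\infty\|S(t)x\|^2\d t\le C'\|x\|^2$ directly, with no Fourier inversion on the imaginary axis needed. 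Your Step~2 (the Datko-type conclusion) is fine as stated.
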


\begin{trm}\label{trm:exp}
Assume that the conditions in Theorem \ref{trm:1} are satisfied. Then the $C_0$-semigroup $\{T(t)\}_{t\ge 0}$ generated by $A$ is exponentially stable.
\end{trm}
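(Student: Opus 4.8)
The plan is to verify the two hypotheses of Huang's theorem (Theorem~\ref{huang}) for the semigroup $\{T(t)\}_{t\ge0}$: that $\i\R\subset\rho(A)$, and that the resolvent is uniformly bounded along the imaginary axis. The first condition has essentially been established already: Theorem~\ref{lem:3.3} (together with Lemma~\ref{lem:compact} and the Hille--Yosida argument) shows that $A-\i\tau$ is injective for all $\tau\in\R$, and since $A$ has compact resolvent, $\sigma(A)$ consists only of eigenvalues, so injectivity of $A-\i\tau$ yields $\i\tau\in\rho(A)$. Thus the real work is the uniform resolvent bound $\sup_{\tau\in\R}\|R(\i\tau,A)\|<\infty$.

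For the resolvent bound I would argue by contradiction, in the standard way for such hyperbolic problems. Suppose it fails; then there exist sequences $\tau_n\in\R$ and $z_n=(w_n,v_n,\xi_n,\psi_n)\in D(A)$ with $\|z_n\|_\H=1$ and $(\i\tau_n-A)z_n=:f_n\to 0$ in $\H$. Writing out the four components, $v_n-\i\tau_n w_n\to 0$ in $H^2$, $(Pw_n')'-\i\tau_n v_n\to0$ in $H^1$, and the two boundary components tend to zero. One first observes that the $\tau_n$ must stay bounded away from $0$ (since $0\in\rho(A)$) and, by the compactness of $R(0,A)$, that $|\tau_n|\to\infty$: if $\tau_n$ had a bounded subsequence one could extract a convergent subsequence of $z_n$ and contradict $\i\R\subset\rho(A)$. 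Taking the real part of $\la(\i\tau_n-A)z_n,z_n\ra_\H$ and using the dissipativity computation from Lemma~\ref{lem:diss} (whose proof is in the appendix) shows that the dissipated quantities --- which I expect to control $v_n(0)=\psi_n$, $v_n'(0)$, $w_n'(0)$, $w_n(0)$, i.e.\ all the boundary data at $x=0$ --- tend to zero. The plan is then to run a boundary-to-interior (observability-type) estimate: from the equation $(Pw_n')'+\tau_n^2 w_n\approx 0$ with all Cauchy data at $x=0$ going to zero, a Gronwall / energy-multiplier argument (multiplying by $\bar w_n$, or by $P w_n'$, and integrating) propagates the smallness from $x=0$ across the whole interval $[0,L]$, forcing $\|w_n\|_{H^2}\to0$ and hence $\|v_n\|_{H^1}\to 0$, contradicting $\|z_n\|_\H=1$.

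Concretely, I would use the rescaled profile: since $v_n\approx\i\tau_n w_n$, the second equation becomes $(Pw_n')'+\tau_n^2 w_n = g_n$ with $g_n\to0$ in an appropriate sense, a Helmholtz-type equation with large frequency $\tau_n$. With the two Cauchy data $w_n(0),w_n'(0)\to0$ in $\C$, one writes $w_n$ via the variation-of-constants formula against the fundamental system of $(P\varphi')'+\tau_n^2\varphi=0$; the $H^2$ bound on $P$ and $P\ge P^0>0$ give $\tau_n$-uniform bounds on that fundamental system (after the natural WKB/energy normalization), so that $\|w_n\|_{L^2}$, $\|w_n'\|_{L^2}$, and then $\|(Pw_n')'\|_{L^2}=\tau_n^2\|w_n\|_{L^2}+o(1)$ are all controlled by the vanishing data plus $o(1)$. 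One then has to be a little careful to also recover the $H^2$-norm of $w_n$ and the $H^1$-norm of $v_n$ (not just $L^2$), but these follow by differentiating the relations and using the same estimates, exactly as in the compactness proof of Lemma~\ref{lem:compact}.

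The main obstacle I anticipate is the first half of this contradiction argument: extracting from $\Re\la f_n,z_n\ra_\H\to0$ the full decay of \emph{all four} boundary traces at $x=0$ (and the trace $v_n(L)=\xi_n$), because the dissipation in the chosen inner product~\eqref{inner_prod} is, as the energy identity \eqref{energy5} suggests, only manifestly controlling $v(t,0)^2$ and $(v(t,0)+\chi_2 w(t,0)-\chi_1 F_i)^2$ --- i.e.\ two combinations, not four quantities. One then needs the specific algebraic structure encoded in the conditions \eqref{cond:theta} (and the sign conditions $\theta_1,\theta_3<0$, $\theta_2,\theta_4>0$, $(a+b-1)^2<4ab$) to upgrade these two dissipated combinations, together with the equation and the boundary conditions built into $D(A)$, to control $v_n(0),v_n'(0),w_n(0),w_n'(0)$ individually. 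Getting the $\tau_n$-uniformity right in that step --- making sure no hidden factor of $\tau_n$ spoils the estimate --- is where the care is needed; the subsequent propagation to the interior is then a routine Gronwall estimate for the Helmholtz equation with a strictly positive, $H^2$ coefficient $P$.
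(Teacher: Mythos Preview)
Your strategy --- argue by contradiction, extract the boundary traces at $x=0$ from the dissipation inequality, then propagate the smallness into the interior --- is a genuinely different route from the paper's. The paper never uses the dissipativity of $A$ in the exponential-stability proof. Instead it performs a \emph{direct} resolvent estimate: from the BVP \eqref{12346t} it eliminates $v$, passes to $\tilde y:=Pw'$ (obtaining $\tilde y''+\tfrac{\tau^2}{P}\tilde y=g'+\i\tau f'$ with the boundary conditions \eqref{bvp:y:2}--\eqref{bvp:y:3}), homogenizes the boundary conditions by subtracting a linear polynomial, and solves by variation of constants against an explicit fundamental pair $\{\phi_1,\phi_2\}$. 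The parameter conditions \eqref{cond:theta} enter not via dissipation but via the algebraic fact that $\gamma_2/\gamma_1\notin\R$, which guarantees that the denominator of the constants $c_1,c_2$ in \eqref{c_1} satisfies $|N|\ge C|\tau|$. Your route, by contrast, trades this two-point BVP analysis for the dissipation identity of Lemma~\ref{lem:diss} (which, since $(a+b-1)^2<4ab$ is strict, controls $v_n(0)$, $J(w_n)$, $J(v_n)$, and hence --- using $v_n-\i\tau_n w_n\to0$ in $H^2$ --- all four traces $v_n(0),v_n'(0),w_n(0),w_n'(0)$), and replaces the BVP by a Cauchy problem at $x=0$.

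However, your interior step has a real gap. You call the propagation ``a routine Gronwall estimate'', but the source in the Helmholtz equation for $w_n$ (equivalently for $\tilde y_n=Pw_n'$) is $G_n=(f_n)_2+\i\tau_n(f_n)_1$, and you only know $\|(f_n)_1\|_{H^2},\|(f_n)_2\|_{H^1}\to0$, not $\tau_n\|(f_n)_1\|\to0$. A Gronwall bound on the frequency-scaled energy $|Pw_n'|^2+\tau_n^2 P|w_n|^2$ therefore leaves a term of size $\tau_n^2\|(f_n)_1\|_{L^2}^2$ on the right, which need not vanish. Likewise, variation of constants with only the crude bounds $|J_{\tau_n}|\le C/\tau_n$, $|\partial_xJ_{\tau_n}|\le C$ (your ``WKB/energy normalization'') yields $\|Pw_n'\|_{L^\infty}\to0$ but \emph{not} $\tau_n\|Pw_n'\|_{L^\infty}\to0$ or $\|(Pw_n')'\|_{L^\infty}\to0$; hence you cannot conclude $\|v_n'\|_{L^2}\to0$ or $\|w_n\|_{H^2}\to0$, and the contradiction does not close at the $H^2\times H^1$ level required here. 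What is missing is precisely the oscillatory-integral gain isolated in the paper as Theorem~\ref{f_integrl}: for $h\in H^1$, integration by parts against the equation gives $\big\|\int_0^x h\,J_\tau\,dt\big\|_{L^\infty}\le C\tau^{-2}\|h\|_{H^1}$ (and $C\tau^{-1}$ with $\partial_xJ_\tau$), recovering one power of $\tau$ on the dangerous term $\i\tau_n(f_n)_1'$. With that lemma your contradiction argument does go through, but then the heart of the proof is the same as the paper's.
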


\begin{proof}[Proof]

We know from Theorem \ref{lem:3.3} that $\{T(t)\}_{t\ge 0}$ is asymptotically stable, and that $\i\R\subset\rho(A)$. The map $\lambda\mapsto R(\lambda,A)$ is analytic on $\rho(A)$ \cite[cf.][]{yosida}, so, in particular, $\lambda\mapsto \|R(\lambda,A)\|$ is continuous on $\i\R$. In order to apply Theorem \ref{huang} it therefore remains to prove that $\|R(\i\tau,A)\|$ is uniformly bounded as $|\tau|\to \infty$. To this end we need to find a $\tau$-uniform estimate for the solution $z=(w,v,v(L),v(0))$ of the equation
\begin{equation}\label{resolv:eq}(A-\i\tau)z=(f,g, g(L),g(0))\in\H\end{equation}
in terms of the right hand side. The corresponding homogeneous problem \eqref{inj:1} only has the trivial solution (cf.~the proof of Theorem~\ref{lem:3.3}). Hence, we show that the unique solution $(w,v)$ of the BVP
\begin{subequations}\label{12346t}
\begin{align}
	v-\i\tau w &= f,\qquad x\in(0,L),\label{bvp:res:1}\\
	(Pw')' -\i\tau v  &= g ,\qquad x\in(0,L),\label{bvp:res:2}\\
	- w'(L)-\i\tau v(L) &= g(L),\label{bvp:res:3}\\
	\theta_1 v(0)+\theta_2 v'(0)+\theta_3w(0)+\theta_4 w'(0)-\i\tau  v(0) &= g(0)\label{bvp:res:4}
\end{align}
\end{subequations}
satisfies the estimate
\begin{equation}\label{est:res:1}
\|w\|_{H^2}+\|v\|_{H^1}\le C(\|f\|_{H^2}+\|g\|_{H^1})
\end{equation}
uniformly for all $f\in H^2(0,L),\,g \in H^1(0,L)$ and for all $|\tau|$ sufficiently large. \\
Since $v$ and $w$ are directly related via equation (\ref{bvp:res:1}), we replace $v$ in
(\ref{bvp:res:2})-(\ref{bvp:res:4}) by $v=f+\i\tau w$ to obtain the following BVP for $w:$
\begin{subequations}\label{1234s}
\begin{align}
	(Pw')' +\tau^2 w &= g +\i\tau f,\qquad x\in(0,L),\label{bvp:w:1}\\
	- w'(L)+\tau^2 w(L) &= (g+\i\tau f)(L),\label{bvp:w:2}\\
	\underbrace{(\theta_4+\i\tau\theta_2)}_{=:\gamma_1}
w'(0)+\underbrace{(\theta_3+\tau^2+\i\tau\theta_1)}_{=:\gamma_2}w(0)&=(g+\i\tau f)(0)-\theta_1 f(0)-\theta_2
f'(0).\label{bvp:w:3}
\end{align}
\end{subequations}
With this, we first show the desired estimate for $w$.\\

{\bf Step 1: Homogeneous boundary conditions.}\\
To begin with, we shall transform (\ref{1234s}) into a BVP with homogeneous boundary conditions. To this end, we use
(\ref{bvp:w:1}) to eliminate the terms $w(0)$ and $w(L)$. This yields, after differentiating (\ref{bvp:w:1}), the
following BVP for $\tilde y:=P w':$
\begin{subequations}\label{1234sy}
\begin{align}
	\tilde y''+\frac{\tau^2}{P}\tilde y&=g'+\i\tau  f',\qquad\qquad x\in (0,L),\label{bvp:y:1}\\
	 \tilde y(L)+ P(L)\tilde y'(L) &=0,\label{bvp:y:2}\\
	\frac{\gamma_1}{P(0)}\tilde y(0)-\frac{\gamma_2}{\tau^2} \tilde y'(0) &=
\underbrace{-\frac{g(0)}{\tau^2}(\theta_3+\i\tau\theta_1)-\frac{\i\theta_3}\tau f(0)-\theta_2
f'(0)}_{=:R_1}.\label{bvp:y:3}
\end{align}
\end{subequations}
In order to make the second boundary condition homogeneous, we determine a first order polynomial $h(x)=a_1x+a_0$, such
that $h(x)$ satisfies the boundary conditions (\ref{bvp:y:2}),(\ref{bvp:y:3}). The coefficients can be determined
uniquely:
\begin{equation}\label{k_d}
	a_1=-\frac{\tau^2 P(0)R_1}{\gamma_1\tau^2(L+{P(L)})+P(0)\gamma_2},\qquad a_0=-(L+{P(L)})a_1.
\end{equation}
We note that, as already mentioned in the proof of Theorem~\ref{lem:3.3}, $\gamma_1/\gamma_2=1/c_0\notin\R$, and so $a_1$ is always well
defined. For $|\tau|>1$ we find the estimate
\begin{equation}\label{est:k}|a_j|\le \frac C{\tau^2}(\|g\|_{H^1}+|\tau|\|f\|_{H^2}),\quad \text{for } j=0,1,\end{equation}
by using the continuous embedding $H^k(0,L)\hookrightarrow C^{k-1}[0,L]$ in one dimension \cite[cf.][]{ad} to estimate
the terms occurring in $R_1$. Now, the function $y:=\tilde y-h$ satisfies the following problem with homogeneous boundary
conditions:
\begin{subequations}\label{12s4}
\begin{align}
	y''+\frac{\tau^2}{P} y&=H:= g'+\i\tau  f'-\frac{\tau^2}{P}h,\qquad x\in (0,L),\label{bvp:yy:1}\\
	 y(L)+ P(L)y'(L) &=0,\label{bvp:yy:2}\\
	\frac{\gamma_1}{P(0)} y(0)-\frac{\gamma_2}{\tau^2} y'(0) &=0.\label{bvp:yy:3}
\end{align}
\end{subequations}
% We note that the function $H$ can be split into two parts $H=H_f+g'$, with $H_f:=\i\tau f'-\frac{\tau^2}{P}h$.

{\bf Step 2: Solution estimate.}\\
Now we determine the solution of (\ref{12s4}). Let $\{\phi_1,\,\phi_2\}$ be a basis of solutions of the homogeneous
equation  $ y''+\frac{\tau^2}{P} y=0$. Then, the general solution of the inhomogeneous equation (\ref{bvp:yy:1}) can be
obtained by variation of constants:
\begin{align}
y(x)&=c_1\phi_1(x)+c_2\phi_2(x)+\int_0^xH(t)\frac
{\phi_1(t)\phi_2(x)-\phi_2(t)\phi_1(x)}{\phi_1(t)\phi'_2(t)-\phi'_1(t)\phi_2(t)}\d t\label{general:y}\\
	&=c_1\phi_1(x)+c_2\phi_2(x)+\int_0^xH(t)J(x,t)\d t,\label{general:y:2}
\end{align}
where $J(x,t)$ is the Green's function introduced in Lemma \ref{lem:J}, and $c_j\in\C$ are arbitrary constants. The
derivative $ y'(x)$ satisfies
\begin{equation}\label{general:dy}
 y'(x)=c_1 \phi'_1(x)+c_2 \phi'_2(x)+\int_0^xH(t)\pd_x J(x,t)\d t.
\end{equation}
In order to determine the constants $c_j$ we now specify the initial conditions of the solutions $\phi_1,\,\phi_2:$
\[\begin{array}{rclcrcl}
	\phi_{1}(0)&=&0,&\quad&\phi_{2}(0)&=&1,\\
	\phi'_{1}(0)&=&\tau,&\quad&\phi'_{2}(0)&=&0.
\end{array}\]
These conditions imply that the functions $\phi_j$ are real-valued. From the boundary conditions
\eqref{bvp:yy:2}-\eqref{bvp:yy:3} we then find
\begin{equation}\label{c_1}
	c_1 = \frac{-\int_0^LH(t)J(L,t)\d t- P(L)\int_0^LH(t)\pd_x J(L,t)\d
t}{\phi_1(L)+{P(L)}\phi'_1(L)+\frac{\gamma_2P(0)}{\gamma_1\tau}[\phi_2(L)+{P(L)}\phi'_2(L)]},\qquad
	c_2 = \frac{\gamma_2P(0)}{\gamma_1\tau} c_1.
\end{equation}
Again, since $\gamma_2/\gamma_1\notin\R$ and $\phi_1,\,\phi_2$ are linearly independent, the coefficients $c_1,\,c_2$ are well defined. Next we estimate these coefficients. First, we find that
\[\lim_{|\tau|\to\infty}\frac{\gamma_2P(0)}{\gamma_1\tau}= -\frac{\i P(0)}{\theta_2}.\]
Therefore, we can find some constant $C>0$, independent of $|\tau|>1$, such that the denominator $N$ of $c_1$ can be estimated as follows:
\begin{align*}
	|N|^2&:=\left|\phi_1(L)+P(L)\phi'_1(L)+\frac{\gamma_2P(0)}{\gamma_1\tau}[\phi_2(L)+P(L)\phi'_2(L)]\right|^2\\
	&\ge C \left(|\phi_1(L)+P(L)\phi'_1(L)|^2+|\phi_2(L)+P(L)\phi'_2(L)|^2\right).
\end{align*}
{}From the initial conditions of $\phi_1,\phi_2$ and Lemma \ref{lem:J} we find that the Wronskian satisfies $\phi'_1(L)\phi_2(L)-\phi_1(L)\phi'_2(L)=\tau$. Since $\|\phi_j\|_{L^\infty}$ is uniformly bounded for all $\tau$ sufficiently large by Lemma \ref{sol:est}, this implies $|\phi'_1(L)|+|\phi'_2(L)|\ge C\tau$, for some constant $C>0$ independent of $\tau$. With this result, we obtain the estimate
\begin{equation}|N|\ge C|\tau|,\end{equation}
for all $|\tau|>1$, and $C$ independent of $\tau$.\\
Now it remains to estimate the integrals occurring in $c_1$ and those in (\ref{general:y:2}) and (\ref{general:dy}). To this end we split these integrals according to $H=(H-g')+g'$. In order to estimate the integrals corresponding to the first term, we apply Theorem \ref{f_integrl} and use the estimates for $h$ found in (\ref{est:k}). For the other integrals we apply H\"older's inequality, and obtain
\[
 \Big\|\int_0^xg'(t)J(x,t)\d t\Big\|_{L^\infty}\le \|g'\|_{L^1}\|J\|_{L^\infty((0,L)^2)}\le \frac{C}{|\tau|}\|g\|_{H^1},
\]
where we used Lemma \ref{sol:est} to estimate $\|J\|_{L^\infty}$. The integrals with $\pd_x J$ instead of $J$ can be estimated analogously. Altogether we obtain

\begin{align}
	\left\|\int_0^xH(t)J(x,t)\d t\right\|_{L^\infty} &\le  \frac
C{|\tau|}(\|g\|_{H^1}+\|f\|_{H^2}),\label{est:int:1}\\
	\left\|\int_0^xH(t)\pd_xJ(x,t)\d t\right\|_{L^\infty} &\le  C(\|g\|_{H^1}+\|f\|_{H^2}),\label{est:int:2}	
\end{align}
for all $|\tau|>1$, with $C>0$ independent of $\tau$. Therefore we conclude that the estimate $|c_j|\le \frac C{|\tau|}(\|g\|_{H^1}+\|f\|_{H^2})$ holds uniformly in $\tau$. Applying these results and the estimates for the basis-functions $\phi_1,\,\phi_2$ found in Lemma \ref{sol:est} to (\ref{general:y:2}) and (\ref{general:dy}), we find that the following estimates hold uniformly for $|\tau|>1:$
\begin{align}
	\|y\|_{L^2}&\le C\|y\|_{L^\infty}\le \frac C{|\tau|}(\|g\|_{H^1}+\|f\|_{H^2}),\label{final:est:y:1}\\
	\|y\|_{H^1}&\le C(\|y'\|_{L^\infty}+\|y\|_{L^\infty})\le C(\|g\|_{H^1}+\|f\|_{H^2}).\label{final:est:y:2}
\end{align}
Using (\ref{est:k}), we see that the same estimates hold for $\tilde y$. Furthermore, by using $\tilde y=P w'$ and the
equation (\ref{bvp:w:1}) to express $w$ in terms of $w'$ and $w''$, we find
\begin{align}
	\|w\|_{H^1}&\le \frac C{|\tau|}(\|g\|_{H^1}+\|f\|_{H^2}),\label{final:est:w:1}\\
	\|w\|_{H^2}&\le C(\|g\|_{H^1}+\|f\|_{H^2}).\label{final:est:w:2}
\end{align}
Finally, from equation (\ref{bvp:res:1}) and by using (\ref{final:est:w:1}) we get the desired estimate
\[\|v\|_{H^1}\le C(\|g\|_{H^1}+\|f\|_{H^2}),\] which completes the proof.
\end{proof}

\bigskip

%%%%%%%%%%%%%%%%%%%%%%%%%%%%%%%%%%%%%%%%%%%%%%%%%%%%%%%%%%%%%%%%%%%%%%%%%%%%%%%%%%%%%%%%%

\section{Conclusions}\label{sec:5}
In \citep{TWK06}, a backstepping-based controller was proposed for the infinite-dimensional model of a gantry crane with heavy chain and payload. This controller shows excellent results, which was also verified experimentally by \cite{TWK06}. In particular the control law was designed in a systematic way, it features to be robust with respect to unmodeled (stick-slip) friction effects, which are always present in real applications, and it can be easily tuned. Though energy dissipation of the closed-loop system could be shown by \cite{TWK06}, the proof of the closed-loop stability did not work out, which was also correctly pointed out by \cite{grab1}. In this paper, a rigorous proof of the asymptotic and exponential stability of the closed-loop system is given. For this, it was necessary to formulate the dynamics of the closed-loop system as an abstract evolution equation in an appropriate Hilbert space which differs from the space $H^1\times L^2$ which is usually used in the context of heavy
chain systems. Moreover, under very mild conditions on the controller parameters, which were explicitly derived, it was proven that the solutions of the closed-loop system are described by an asymptotically stable $C_0$-semigroup of contractions. Finally, by employing Huang's theorem it was even possible to show that under the same conditions the backstepping-based boundary controller renders the closed-loop system exponentially stable.

%%%%%%%%%%%%%%%%%%%%%%%%%%%%%%%%%%%%%%%%%%%%%%%%%%%%%%%%%%%%%%%%%%%%%%%%%%%%%%%%%%%%%%%%%
\appendix
\section{Useful Inequalities}\label{app:a}

\begin{lem}\label{lem:A:1}
Let $a_0, b_0,\epsilon_0>0$ be given. Then there exist positive constants $c,d$ such that
\begin{equation}\label{ap:1} (ax_1+bx_2)^2+\epsilon x_1^2\ge cx_1^2+dx_2^2\end{equation}
holds uniformly for all $x_1,x_2\in\R$ and $|a|\le a_0,\,b\ge b_0$ and $\epsilon\ge\epsilon_0$.
\end{lem}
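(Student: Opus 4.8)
The plan is to regard \eqref{ap:1} as an elementary statement about a positive-definite quadratic form in $(x_1,x_2)$, with essentially all the effort going into making the constants uniform over the admissible parameter range. First I would expand $(ax_1+bx_2)^2=a^2x_1^2+2abx_1x_2+b^2x_2^2$; the only term needing care is the mixed one. To absorb it I would invoke the weighted Young inequality
\begin{equation*}
 (p+q)^2\ge\Big(1-\tfrac{1}{\eta}\Big)p^2+(1-\eta)q^2,\qquad \eta>0,
\end{equation*}
which is merely a rearrangement of $\big(\eta^{-1/2}p+\eta^{1/2}q\big)^2\ge 0$. Applying it with $p=ax_1$, $q=bx_2$ gives
\begin{equation*}
 (ax_1+bx_2)^2+\epsilon x_1^2\ge\Big[\epsilon-\Big(\tfrac{1}{\eta}-1\Big)a^2\Big]x_1^2+(1-\eta)b^2x_2^2 .
\end{equation*}

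Next I would fix the free parameter $\eta$. It must be taken in $(0,1)$, so that the coefficient of $x_2^2$ is positive and, using $b\ge b_0$, bounded below by $(1-\eta)b_0^2$; and simultaneously it must be chosen close enough to $1$ that $\tfrac{1}{\eta}-1<\epsilon_0/a_0^2$. Such an $\eta$ exists precisely because $\epsilon_0/a_0^2>0$; concretely any $\eta\in\big(a_0^2/(a_0^2+\epsilon_0),\,1\big)$ works. For such an $\eta$, the hypotheses $|a|\le a_0$ and $\epsilon\ge\epsilon_0$ give $\epsilon-(\tfrac{1}{\eta}-1)a^2\ge\epsilon_0-(\tfrac{1}{\eta}-1)a_0^2=:c>0$, while $(1-\eta)b^2\ge(1-\eta)b_0^2=:d>0$. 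These $c,d$ depend only on $a_0,b_0,\epsilon_0$, and with them the displayed lower bound is exactly $cx_1^2+dx_2^2$, which is \eqref{ap:1}.

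I do not expect a genuine obstacle here; the one point to keep in mind is that the unboundedness of $b$ from above is harmless, since $b$ enters only through the $x_2^2$-term, which it only strengthens, whereas the delicate quantity is the $x_1^2$-coefficient, whose positivity rests on the strict hypothesis $\epsilon_0>0$ together with the fact that $\eta$ can be selected from $a_0,\epsilon_0$ alone, not from the particular admissible triple $(a,b,\epsilon)$. As an alternative route one could work directly with the symmetric matrix $M=\left(\begin{smallmatrix}a^2+\epsilon & ab\\ ab & b^2\end{smallmatrix}\right)$ representing the left-hand side of \eqref{ap:1}: one has $\det M=\epsilon b^2>0$ and $\operatorname{tr}M=a^2+\epsilon+b^2>0$, so the smaller eigenvalue of $M$ is at least $\det M/\operatorname{tr}M=\epsilon b^2/(a^2+\epsilon+b^2)\ge\epsilon_0 b_0^2/(a_0^2+\epsilon_0+b_0^2)$, and one may simply take $c=d$ equal to this last constant.
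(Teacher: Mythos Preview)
Your proof is correct. Both your Young-inequality argument and your eigenvalue alternative are valid; the monotonicity implicit in the claim $\epsilon b^2/(a^2+\epsilon+b^2)\ge\epsilon_0 b_0^2/(a_0^2+\epsilon_0+b_0^2)$ in the alternative route does hold (the expression is decreasing in $|a|$ and increasing in $b$ and $\epsilon$), though you might state this explicitly.

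The paper also treats \eqref{ap:1} as a quadratic-form positivity statement, but proceeds differently: it writes the difference of the two sides as $x^T M x$ with $M=\left(\begin{smallmatrix}a^2+\epsilon-c & ab\\ ab & b^2-d\end{smallmatrix}\right)$, applies Sylvester's criterion, and is led to the condition $c\le\epsilon-a^2 d/(b^2-d)$; it then bounds the right-hand side below by $\epsilon_0-a_0^2 d/(b_0^2-d)$ and observes that this is positive for $d$ small enough. Your Young-inequality route is more direct and yields explicit constants $c=\epsilon_0-(\tfrac1\eta-1)a_0^2$, $d=(1-\eta)b_0^2$ in one step, while your eigenvalue alternative even produces a single constant $c=d$. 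The paper's argument, by contrast, is existential in flavor (choose $d$ small, then $c$) and treats the case $a=0$ separately. In short: same underlying idea (positive-definiteness of a $2\times2$ form with uniform control of parameters), but your execution is tighter and more constructive.
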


\begin{proof}
Inequality (\ref{ap:1}) can be rewritten in the equivalent form
\[\left[\begin{array}{c}x_1\\x_2\end{array}\right]^T\left[\begin{array}{cc} a^2+\epsilon-c & ab\\ ab
&b^2-d\end{array}\right]\left[\begin{array}{c}x_1\\x_2\end{array}\right]\ge 0,  \]
where the occurring matrix will be denoted as $M$. Since this inequality has to hold for all $x_1,x_2\in\R$, it is
equivalent to $M$ being positive semi-definite. Applying the Sylvester criterion yields the following conditions:
\begin{align}
	b^2-d&\ge 0,\label{eq:1}\\
	(\epsilon-c)(b^2-d)&\ge a^2d.\label{eq:2}
\end{align}
If $a=0$, we can take $c=\epsilon_0$ and $d=b^2$. Otherwise, we see from the conditions \eqref{eq:1} and (\ref{eq:2}) that $d< b^2$, so
that \eqref{eq:2} can be written as:
\begin{equation}\label{eq:3}
c\le \epsilon-a^2\frac d{b^2-d}.
\end{equation}
Because of the monotonicity of the right hand side we find the estimate
\[\epsilon-a^2\frac d{b^2-d}\ge \epsilon_0-a_0^2\frac d{b_0^2-d}.\] So, for (\ref{eq:3}) to hold, it is sufficient that
$c,d$ satisfy the stricter inequality
\begin{equation}\label{eq:4}c\le\epsilon_0-a_0^2\frac d{b_0^2-d}.\end{equation} For $d$ sufficiently small, the right hand side becomes positive,
and therefore a $c>0$ satisfying (\ref{eq:4}) exists.
\end{proof}

\begin{lem}\label{lem:A:2}
Let $\alpha,\beta,\delta\in\R$ and
\[
P_3(x_1,x_2,x_3):=x_1^2+x_2^2+x_3^2+2\alpha x_1x_2+2\beta x_2x_3+2\delta x_1x_3
\]
be a polynomial. Then the inequality $P_3(x_1,x_2,x_3)\ge 0$ holds for all $x_1,x_2,x_3\in\R$ if and only if the coefficients satisfy the conditions
\begin{align*}
	\alpha^2\le 1,\quad \beta^2\le1,\quad \delta^2\le 1,\\
	\alpha^2+\beta^2+\delta^2\le 1+2\alpha\beta\delta.
\end{align*}
\end{lem}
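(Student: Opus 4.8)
The plan is to view $P_3$ as a quadratic form. With $x=(x_1,x_2,x_3)^{\top}$ one has $P_3(x_1,x_2,x_3)=x^{\top}Mx$, where
$$
M=\begin{pmatrix}1&\alpha&\delta\\\alpha&1&\beta\\\delta&\beta&1\end{pmatrix},
$$
so that $P_3\ge0$ on all of $\R^3$ is equivalent to $M$ being positive semi-definite. The four conditions in the statement are then precisely the non-negativity of all principal minors of $M$: the three first-order minors are the diagonal entries, all equal to $1$; the three second-order principal minors equal $1-\alpha^2$, $1-\beta^2$, $1-\delta^2$; and $\det M=1+2\alpha\beta\delta-\alpha^2-\beta^2-\delta^2$. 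Thus the lemma is the classical characterization ``$M\succeq0$ if and only if every principal minor of $M$ is non-negative'', and one legitimate option is simply to invoke it, as was essentially done via the Sylvester criterion in the proof of Lemma~\ref{lem:A:1}.

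For a self-contained argument I would complete the square directly. By homogeneity, $P_3\ge0$ on $\R^3$ holds if and only if $P_3(x_1,x_2,0)\ge0$ and $P_3(x_1,x_2,1)\ge0$ for all $x_1,x_2\in\R$. The slice $x_3=0$ reads $P_3=(x_1+\alpha x_2)^2+(1-\alpha^2)x_2^2$, which is non-negative for all $x_1,x_2$ if and only if $\alpha^2\le1$; evaluating instead on $(0,1,x_3)$ and on $(x_1,0,1)$ and minimizing the resulting one-variable quadratics gives $\beta^2\le1$ and $\delta^2\le1$. For the remaining inequality, assume first $\alpha^2<1$ and complete the square in the slice $x_3=1$, first in $x_1$ and then in $x_2$:
$$
P_3=\bigl(x_1+\alpha x_2+\delta\bigr)^2+(1-\alpha^2)\Bigl(x_2+\tfrac{\beta-\alpha\delta}{1-\alpha^2}\Bigr)^2+\Bigl(1-\delta^2-\tfrac{(\beta-\alpha\delta)^2}{1-\alpha^2}\Bigr).
$$
The infimum of the right-hand side over $x_1,x_2$ is the bracketed constant, so $P_3\ge0$ forces $(1-\alpha^2)(1-\delta^2)\ge(\beta-\alpha\delta)^2$; on expanding, the terms $\alpha^2\delta^2$ cancel and this is exactly $\alpha^2+\beta^2+\delta^2\le1+2\alpha\beta\delta$. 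Conversely, if all four conditions hold, the displayed identity together with the $x_3=0$ slice exhibits $P_3$ as a sum of squares with non-negative coefficients, so $P_3\ge0$. This settles the case $\alpha^2<1$, and, by relabelling the variables (which merely permutes $(\alpha,\beta,\delta)$ in $P_3$ and leaves the hypotheses invariant), also the cases $\beta^2<1$ or $\delta^2<1$.

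The one delicate point — and the step I expect to be the main obstacle — is the degenerate case in which a denominator $1-\alpha^2$ (or $1-\beta^2$, $1-\delta^2$) vanishes. As long as at least one of these three is strictly positive one may complete the square in the corresponding order and the argument above applies; the truly degenerate case is $\alpha^2=\beta^2=\delta^2=1$. There the fourth inequality reads $3\le1+2\alpha\beta\delta$, forcing $\alpha\beta\delta=1$, hence $\beta=\alpha\delta$, and a direct expansion shows that then $P_3=(x_1+\alpha x_2+\delta x_3)^2\ge0$; conversely, evaluating $P_3\ge0$ on the vector $(x_1,-\alpha x_1,x_3)$ makes the quadratic part in $(x_1,x_2)$ vanish and leaves $x_3^2+2x_1x_3(\delta-\alpha\beta)$, which forces $\delta=\alpha\beta$, i.e.\ the fourth condition. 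Once these boundary cases are dispatched, the remainder is routine algebra.
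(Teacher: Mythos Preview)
Your first paragraph is exactly the paper's proof: write $P_3(x)=x^\top M x$ with the displayed $3\times3$ matrix, observe that $P_3\ge0$ is equivalent to $M\succeq0$, and read off the four inequalities as the non-negativity of the principal minors (``Sylvester criterion''). The paper stops there.

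Your subsequent completion-of-squares argument is a genuinely different, self-contained route. It is correct: the homogeneity reduction to the slices $x_3=0$ and $x_3=1$ is valid (every line through the origin meets one of them), the two successive square-completions yield the identity
\[
P_3=(x_1+\alpha x_2+\delta x_3)^2+(1-\alpha^2)\Bigl(x_2+\tfrac{\beta-\alpha\delta}{1-\alpha^2}\,x_3\Bigr)^2+\Bigl(1-\delta^2-\tfrac{(\beta-\alpha\delta)^2}{1-\alpha^2}\Bigr)x_3^2
\]
for $\alpha^2<1$, and your treatment of the fully degenerate case $\alpha^2=\beta^2=\delta^2=1$ via the substitution $x_2=-\alpha x_1$ is sound. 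What this buys over the paper's one-line proof is that you do not silently rely on the (often misquoted) fact that semi-definiteness requires \emph{all} principal minors, not just the leading ones; the price is a case distinction and more algebra. Either approach is fine here.
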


\begin{proof}
The polynomial can be written as
\[P_3(x_1,x_2,x_3)=\left[\begin{array}{c}x_1\\x_2\\x_3\end{array}\right]^T \left[\begin{array}{ccc}
1&\alpha&\delta\\\alpha&1&\beta\\\delta&\beta&1\end{array}\right]\left[\begin{array}{c}x_1\\x_2\\x_3\end{array}\right],
\]
with $M$ denoting the $3\times 3$ matrix. Now the property $P_3(x_1,x_2,x_3)\ge 0,\,\,\forall x_1,x_2,x_3\in\R$ is equivalent to $M$ being positive semi-definite. Applying the Sylvester criterion to $M$ yields the desired conditions.
\end{proof}

\bigskip

%%%%%%%%%%%%%%%%%%%%%%%%%%%%%%%%%%%%%%%%%%%%%%%%%%%%%%%%%%%%%%%%%%%%%%%%%%%%%%%%%%%%%%%%%%%%%%%%%
\section{ODEs with a Parameter: Uniform Estimates}\label{app:b:1}
%\subsection{Properties of a certain Class of Functions (Version 1)}

In this section we discuss the behavior of classical solutions $y\in C^2[0,L]$ to the equation
\begin{equation}\label{ode:hom:1}
y''+\frac {\tau^2}{P(x)}y=0, \qquad x\in(0,L),
\end{equation}
where $\tau\in\R$ and $P\in C^1[0,L]$ is a real-valued function satisfying $P^0\le P(x)\le P^1$ uniformly for
$x\in[0,L]$ for some positive constants $P^0, P^1$. Since $\tau$ only occurs squared, we can assume that $\tau\ge 0$
holds in the following.

\begin{lem}[\citealp{ref:ode1}]\label{lem:J}
Let $(\phi_1, \phi_2)$ be an arbitrary pair of linearly independent solutions of (\ref{ode:hom:1}). Then the Green's
function of the equation is given by
\begin{equation}\label{def:J}
J(x,t):=\frac{\phi_1(x)\phi_2(t)-\phi_2(x)\phi_1(t)}{ \phi'_1(t)\phi_2(t)-\phi_1(t)\phi'_2(t)}.
\end{equation}
Furthermore, the Wronskian $W(t):= \phi'_1(t)\phi_2(t)-\phi_1(t)\phi'_2(t)$ is constant for $t\in[0,L]$.
Hence, \eqref{def:J} simplifies to $J(x,t)=C[\phi_1(x)\phi_2(t)-\phi_2(x)\phi_1(t)]$.
\end{lem}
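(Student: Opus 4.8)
The plan is to treat the two assertions of the lemma separately; since both are classical facts about linear second-order ODEs, one could also simply cite \cite{ref:ode1}, but I would rather record the short self-contained argument.

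\emph{Constancy of the Wronskian.} First I would observe that \eqref{ode:hom:1} has the form $y''+q(x)y=0$ with $q:=\tau^2/P$, which is continuous on $[0,L]$ because $P\ge P^0>0$, and --- decisively --- with \emph{no first-order term}. Differentiating $W(t)=\phi_1'(t)\phi_2(t)-\phi_1(t)\phi_2'(t)$ and substituting $\phi_j''=-q\phi_j$ gives $W'(t)=\phi_1''\phi_2-\phi_1\phi_2''=-q\phi_1\phi_2+q\phi_1\phi_2=0$, so $W\equiv W(0)$. Linear independence of the pair $\{\phi_1,\phi_2\}$ forces $W(0)\ne0$, and setting $C:=1/W$ converts \eqref{def:J} into the product form $J(x,t)=C[\phi_1(x)\phi_2(t)-\phi_2(x)\phi_1(t)]$ stated in the lemma.

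\emph{The Green's-function property.} Next I would carry out the standard variation-of-parameters construction for the inhomogeneous equation $y''+qy=H$. Starting from the ansatz $y_p=c_1(x)\phi_1(x)+c_2(x)\phi_2(x)$ with the auxiliary constraint $c_1'\phi_1+c_2'\phi_2=0$, one differentiation and insertion into the ODE yield $c_1'\phi_1'+c_2'\phi_2'=H$. The determinant of this $2\times2$ linear system for $(c_1',c_2')$ equals $\phi_1\phi_2'-\phi_1'\phi_2=-W\ne0$, hence $c_1'=H\phi_2/W$ and $c_2'=-H\phi_1/W$; integrating from $0$ and collecting terms reproduces precisely $y_p(x)=\int_0^xH(t)J(x,t)\d t$ with $J$ as in \eqref{def:J}. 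The constraint makes the boundary contribution in the derivative cancel, leaving $y_p'(x)=\int_0^xH(t)\pd_xJ(x,t)\d t$ --- this is the formula invoked in \eqref{general:dy} --- and $y_p(0)=y_p'(0)=0$ is immediate since both integrals run over $[0,0]$. Therefore $y_p$ is the unique classical solution of the inhomogeneous problem vanishing to first order at $x=0$, i.e.~$J$ is the Green's function of \eqref{ode:hom:1}.

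The only point that asks for a word of care, and it is not a genuine obstacle, is the existence of a linearly independent pair $\{\phi_1,\phi_2\}$ with $W\ne0$; this follows at once from the Picard--Lindel\"of theorem applied to \eqref{ode:hom:1}, whose coefficient is continuous on the compact interval $[0,L]$, together with the linear-independence hypothesis. Everything else reduces to the elementary computations sketched above, so I do not expect any substantive difficulty.
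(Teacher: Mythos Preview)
Your argument is correct: the Wronskian computation and the variation-of-parameters derivation are both carried out cleanly, and the signs match the paper's definition of $J(x,t)$ and its later use in \eqref{general:y}--\eqref{general:dy}. Note, however, that the paper does not actually prove this lemma at all --- it is stated with a citation to \cite{ref:ode1} and left without proof, since both assertions are textbook facts about linear second-order ODEs without a first-order term. Your self-contained derivation is a welcome addition rather than a competing approach; there is nothing in the paper's treatment to compare it against.
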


With the prescribed initial data $\varphi(0)$ and $\varphi'(0)$, we shall denote the unique classical solution of
(\ref{ode:hom:1}) by $\varphi_\tau$. The behavior of solutions of (\ref{ode:hom:1}) is stated in the following lemma.
For the proof, see Prop.~2.1 in \citep{abn}.

\begin{lem}\label{sol:est}
There exists a constant $C>0$ such that for any family of solutions $\{\phi_\tau\}_{\tau>1}$ of (\ref{ode:hom:1}) the
following estimates hold uniformly for  $\tau>1$:
\[\|\phi_\tau\|_{L^\infty} \le \frac C\tau\left(\tau|\phi_\tau(0)|+| \phi'_\tau(0)|\right),\qquad
\| \phi'_\tau\|_{L^\infty} \le  C\left(\tau|\phi_\tau(0)|+| \phi'_\tau(0)|\right).\]
\end{lem}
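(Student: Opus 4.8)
The plan is to avoid any WKB/asymptotic machinery and instead derive both estimates from a single energy identity combined with Gr\"onwall's inequality, exploiting the special structure of the coefficient $\tau^2/P(x)$. First I would fix a solution $\phi=\phi_\tau$ of \eqref{ode:hom:1} and introduce the energy functional
\[
  E(x):=|\phi'(x)|^2+\frac{\tau^2}{P(x)}\,|\phi(x)|^2,\qquad x\in[0,L].
\]
Differentiating and using the equation $\phi''=-\frac{\tau^2}{P}\phi$ to eliminate $\phi''$, the cross terms cancel and one is left with the clean identity
\[
  E'(x)=\Bigl(\frac{\tau^2}{P(x)}\Bigr)'|\phi(x)|^2=-\frac{P'(x)}{P(x)}\cdot\frac{\tau^2}{P(x)}\,|\phi(x)|^2 .
\]

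Next I would estimate $|E'|$: since $\frac{\tau^2}{P(x)}|\phi(x)|^2\le E(x)$ and $|P'(x)/P(x)|\le\|P'\|_{L^\infty}/P^0=:M$ holds uniformly for $x\in[0,L]$ and --- this is the crucial point --- uniformly in $\tau$, one obtains the differential inequality $|E'(x)|\le M\,E(x)$ on $[0,L]$. Gr\"onwall's inequality then gives $E(x)\le e^{ML}E(0)$ for all $x\in[0,L]$ and all $\tau>1$, and the initial energy is controlled by
\[
  E(0)=|\phi'(0)|^2+\frac{\tau^2}{P(0)}|\phi(0)|^2\le\max\{1,1/P^0\}\bigl(\tau|\phi(0)|+|\phi'(0)|\bigr)^2 ,
\]
so $E(x)\le C\bigl(\tau|\phi(0)|+|\phi'(0)|\bigr)^2$ with $C$ independent of $x$ and $\tau$. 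From here the two claimed bounds drop out of the definition of $E$: since $|\phi'(x)|^2\le E(x)$, taking square roots yields $\|\phi'\|_{L^\infty}\le C(\tau|\phi(0)|+|\phi'(0)|)$; and since $|\phi(x)|^2\le\frac{P(x)}{\tau^2}E(x)\le\frac{P^1}{\tau^2}E(x)$, we obtain $\|\phi\|_{L^\infty}\le\frac{C}{\tau}(\tau|\phi(0)|+|\phi'(0)|)$, i.e.\ precisely the extra power of $1/\tau$.

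The only real obstacle is the uniformity bookkeeping, and the energy identity is tailored to handle it: the logarithmic derivative $\bigl(\log(\tau^2/P)\bigr)'=-P'/P$ carries no $\tau$-dependence, and $P$ is bounded away from $0$ and $\infty$ by hypothesis, so neither $M$ nor the Gr\"onwall constant $e^{ML}$ depends on $\tau$. Had one instead tried to bound $\|\phi''\|$ directly, or used the naive functional $|\phi|^2+|\phi'|^2$, the factor $\tau^2$ would reappear in the differential inequality and the resulting bound would degrade as $|\tau|\to\infty$; the whole point of weighting $|\phi|^2$ by $\tau^2/P$ is to make the $\tau$-growth cancel in the energy balance.
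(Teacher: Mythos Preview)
Your argument is correct. The energy functional $E(x)=|\phi'(x)|^2+\frac{\tau^2}{P(x)}|\phi(x)|^2$ is precisely the right choice: the cross terms in $E'$ cancel via the equation, leaving $E'=-\frac{P'}{P}\cdot\frac{\tau^2}{P}|\phi|^2$, and the key observation that $|P'/P|$ is bounded independently of $\tau$ makes the Gr\"onwall constant $\tau$-uniform. The extraction of both estimates from the single bound on $E$ is clean, and the hypothesis $P\in C^1[0,L]$ with $P^0\le P\le P^1$ is exactly what is needed for $M=\|P'\|_{L^\infty}/P^0$ to be finite.

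The paper itself does not prove this lemma; it simply refers to Proposition~2.1 of \citep{abn}. Your proof is therefore a genuine contribution in that it is self-contained and entirely elementary---no WKB asymptotics, no Pr\"ufer transformation, just a one-line energy identity and Gr\"onwall. The only minor polish I would suggest is to state explicitly that the argument works for complex-valued $\phi$ (which you implicitly handle by writing $|\phi|^2$ rather than $\phi^2$), or to remark that it suffices to treat real solutions since the equation has real coefficients.
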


Now we are able to prove the following theorem:

\begin{trm}\label{f_integrl}
Let $\{J_\tau\}_{\tau>1}$ be the family of Green's functions defined in Lemma \ref{lem:J}. Then there exists a constant
$C>0$  such that the following estimates hold uniformly for all $f\in H^1(0,L)$ and $\tau>1$:
\begin{align}
	\left\|\int_0^x f(t)J_\tau(x,t)\d t\right\|_{L^\infty} &\le \frac C{\tau^2}\|f\|_{H^1},\label{int:est:1}\\
	\left\|\int_0^x f(t)\pd_xJ_\tau(x,t)\d t\right\|_{L^\infty} &\le \frac C{\tau}\|f\|_{H^1}.\label{int:est:2}
\end{align}
\end{trm}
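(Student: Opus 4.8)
The plan is to estimate the two convolution integrals by integrating by parts once, so as to transfer the derivative off $f$ and onto the kernel, and then to exploit the explicit oscillatory structure of $J_\tau$ together with the uniform sup-norm bounds from Lemma~\ref{sol:est}. Throughout, I fix a convenient basis: let $\phi_{1,\tau}$, $\phi_{2,\tau}$ be the solutions of \eqref{ode:hom:1} with $\phi_{1,\tau}(0)=0$, $\phi'_{1,\tau}(0)=\tau$, $\phi_{2,\tau}(0)=1$, $\phi'_{2,\tau}(0)=0$ (as used in the main text); then by Lemma~\ref{lem:J} the Wronskian is the constant $W\equiv-\tau$, so $J_\tau(x,t)=\frac1\tau\big(\phi_{2,\tau}(x)\phi_{1,\tau}(t)-\phi_{1,\tau}(x)\phi_{2,\tau}(t)\big)$, and Lemma~\ref{sol:est} gives $\|\phi_{j,\tau}\|_{L^\infty}\le C$ and $\|\phi'_{j,\tau}\|_{L^\infty}\le C\tau$ uniformly for $\tau>1$.

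For \eqref{int:est:1}, the naive bound $\|\int_0^x f J_\tau\|_{L^\infty}\le\|f\|_{L^1}\|J_\tau\|_{L^\infty}$ only yields a factor $C/\tau$, which is not enough. To gain the extra power of $\tau$, I would write $J_\tau(x,t)=\pd_t K_\tau(x,t)$ for a primitive $K_\tau$ in the $t$-variable that is itself $O(\tau^{-2})$ in sup-norm: concretely, from $\pd_t^2 J_\tau(x,t) = -\frac{\tau^2}{P(t)}J_\tau(x,t)$ (since, as a function of $t$, $J_\tau(x,\cdot)$ solves \eqref{ode:hom:1} up to the diagonal) one has $J_\tau(x,t) = -\frac{P(t)}{\tau^2}\pd_t^2 J_\tau(x,t)$; integrating by parts twice in $t$ then moves two derivatives onto $J_\tau$ — but $\pd_t J_\tau$ is only $O(1)$, so this route needs the $\tau^{-2}$ prefactor to compensate, and the boundary terms at $t=0$ and $t=x$ must be checked ($J_\tau(x,x)=0$ and $\pd_t J_\tau(x,x)$ is $O(1)$, times the $\tau^{-2}$ prefactor, which is fine). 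After this manipulation one is left with an integral of $f$ or $f'$ against a kernel that is $O(\tau^{-2})$ or $O(\tau^{-1})$ in sup-norm, and Hölder's inequality with $\|f\|_{L^1}+\|f'\|_{L^1}\le C\|f\|_{H^1}$ (one-dimensional embedding) closes the estimate. Estimate \eqref{int:est:2} is handled the same way: $\pd_x J_\tau$ has the same product structure with the roles of $\phi'_{j,\tau}$ (size $O(\tau)$) appearing in the $x$-slot, so $\|\pd_x J_\tau\|_{L^\infty}=O(1)$; one integration by parts in $t$ against the $O(\tau^{-2})$ primitive of $\pd_x J_\tau$ in $t$ (again using $\pd_t^2$ acting on it and the equation) produces the gain from $O(1)$ down to $O(\tau^{-1})$, and boundary terms are treated as before.

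The main obstacle is bookkeeping the integration-by-parts boundary contributions cleanly and making sure every constant is genuinely $\tau$-uniform: one must verify that the primitive $K_\tau(x,\cdot)$ (built from products $\phi_{i,\tau}(x)\,\Psi_{j,\tau}(t)$ with $\Psi_{j,\tau}$ a primitive of $\phi_{j,\tau}$) is itself $O(\tau^{-1})$ in sup-norm — this follows because a primitive of a solution of \eqref{ode:hom:1} can be expressed via the equation as $-\frac{P}{\tau^2}\phi'_{j,\tau}$ up to an additive constant and lower-order terms, hence is $O(\tau^{-1})$ — and that the diagonal values $\pd_t^k J_\tau(x,x)$ contribute only lower-order terms. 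An equivalent and perhaps cleaner route, which I would mention as an alternative, is to use the resolvent/variation-of-parameters identity directly: the function $u_\tau(x):=\int_0^x f(t)J_\tau(x,t)\,\d t$ is the solution of $u''+\frac{\tau^2}{P}u = f$ with $u(0)=u'(0)=0$, so multiplying by $\bar u$, integrating over $(0,x)$, and using $\Re\int(u''\bar u)=\ldots$ gives an $L^2$-type energy identity from which $\|u_\tau\|_{L^2}\le C\tau^{-2}\|f\|_{L^2}$ and $\|u'_\tau\|_{L^2}\le C\tau^{-1}\|f\|_{L^2}$ drop out, and then the one-dimensional Agmon/Sobolev inequality $\|u\|_{L^\infty}^2\le C\|u\|_{L^2}\|u'\|_{L^2}$ (with the vanishing Cauchy data at $0$) upgrades these to the claimed $L^\infty$ bounds; the $H^1$ regularity of $f$ enters only to control $u'$ up to the endpoint. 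Either way, the heart of the matter is extracting the two powers of $\tau^{-1}$, and both the integration-by-parts and the energy-estimate arguments achieve this from the single structural fact that the kernel solves \eqref{ode:hom:1} with a $\tau^2$ coefficient.
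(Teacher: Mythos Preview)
Your primary strategy---replace $J_\tau(x,t)$ by $-\frac{P(t)}{\tau^2}\pd_t^2 J_\tau(x,t)$ via the ODE and integrate by parts in $t$---is precisely the paper's method; the only cosmetic difference is that the paper first substitutes $t=x-\xi$ and works with $\psi_x(\xi):=J_\tau(x,x-\xi)$, which has the convenient Cauchy data $\psi_x(0)=0$, $\psi_x'(0)=1$ and so falls directly under Lemma~\ref{sol:est}. One point in your write-up needs correcting, though: only a \emph{single} integration by parts is used, not two. Starting from $\int_0^x fJ_\tau\,\d t=-\tau^{-2}\int_0^x fP\,\pd_t^2J_\tau\,\d t$, one IBP leaves the boundary terms $-\tau^{-2}[fP\,\pd_t J_\tau]_0^x$ and the bulk term $\tau^{-2}\int_0^x(fP)'\,\pd_t J_\tau\,\d t$; since $\pd_t J_\tau=O(1)$ uniformly (by Lemma~\ref{sol:est}), every piece is already $O(\tau^{-2}\|f\|_{H^1})$. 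A second integration by parts would require control of $(fP)''$, which is unavailable for $f\in H^1$.

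Your alternative energy argument, by contrast, has a genuine gap. The asserted bound $\|u_\tau\|_{L^2}\le C\tau^{-2}\|f\|_{L^2}$ is false: with $P\equiv 1$ and $f(t)=\cos(\tau t)$ the solution of $u''+\tau^2u=f$, $u(0)=u'(0)=0$ is $u_\tau(x)=\frac{x}{2\tau}\sin(\tau x)$, so $\|u_\tau\|_{L^2}\sim\tau^{-1}$ while $\|f\|_{L^2}\sim 1$. The $H^1$ norm of $f$ is genuinely needed to extract the second power of $\tau^{-1}$, and multiplying the equation by $\bar u$ and integrating does not deliver it: the two quadratic terms $-\|u'\|_{L^2}^2$ and $\tau^2\int|u|^2/P$ carry opposite signs, and the boundary contribution $u'(L)\overline{u(L)}$ (or $u'(x)\overline{u(x)}$ if you stop at $x$) is exactly the quantity you are trying to bound. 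So this route does not close; stick with the IBP argument.
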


\begin{proof}
We are going to show (\ref{int:est:1}), the proof of (\ref{int:est:2}) can be done analogously. The index $\tau$ is
omitted for sake of simplicity. First, we make the substitution $t=x-\xi$ in the left hand integral, and define the
family of functions $\psi_x:\xi\mapsto J(x,x-\xi)$ with parameter $x$. These functions are solutions of the equation
\begin{equation}\label{eq-psi}
\psi''_x+\frac{\tau^2}{P(x-\xi)}\psi_x=0,
\end{equation}
with $'$ denoting here derivatives with respect to $\xi$.
$\psi_x$ takes the initial values $\psi_x(\xi=0)=0$ and $\psi'_x(\xi=0)=1$. Now, integrating by parts yields
\begin{align*}
\left|\int_0^x f(x-\xi)\psi_x(\xi)\d \xi\right| &=
\bigg|-\int_0^x \pd_\xi[(fP)(x-\xi)]\int_0^\xi\!\!\frac{\psi_x(\zeta)}{P(x-\zeta)}
\d\zeta\d\xi\\
&\quad\quad+f(0)P(0)\int_0^x \frac{\psi_x(\zeta)}{P(x-\zeta)}\d\zeta\bigg|\\
	&\le 2\frac{\|\psi'_x\|_{L^\infty}}{\tau^2}\bigg(\int_0^x\!\!|\pd_\xi(fP)(x-\xi)|\d\xi+|f(0)P(0)|\bigg)\\
	&\le C\frac{\|\psi'_x\|_{L^\infty}\|f\|_{H^1}}{\tau^2},
\end{align*}
where we used \eqref{eq-psi} in the second step. And in the last step we used
the continuous embedding $H^1(0,L)\hookrightarrow C[0,L]$. From Lemma \ref{sol:est} and the known initial conditions of
$\psi_x$ we find that $\|\psi'_x\|_{L^\infty}$ is uniformly bounded for all $\tau>1$. Finally, we notice that $\psi_x(x-t)=J(x,t)$, so the above estimate also holds for $J$ instead of $\psi_x$, which proves (\ref{int:est:1}).
\end{proof}

\section{Deferred proofs}\label{app:c}

\begin{proof}[Proof of Lemma~\ref{lem:diss}]
For all $z\in D(A)$ we have:
\begin{align}
	\Re\la z,Az\rah = & \,\,\Re\bigg[{\alpha_1\gamma}\int_0^L(Pw')'(P \bar v')'\d x + \alpha_1\int_0^LP w' \bar v'\d
x\nonumber\\
	&+\alpha_1\gamma P(L)w'(L) \bar v'(L)+\alpha_2w(0)\bar v(0)\nonumber\\
	&+{\alpha_1\gamma}\int_0^LP v'(P \bar w')''\d x+\alpha_1\int_0^Lv(P\bar w')'\d x\nonumber\\
	&-\alpha_1P(L)v(L) \bar w'(L)+\alpha_2\gamma v(0)\bar F\nonumber\\
	&+\frac 12\big[v(0)-2\alpha_1P(0) w'(0)+2\alpha_2w(0)\big]\big[\bar F-2\alpha_1P(0) \bar v'(0)+2\alpha_2\bar
v(0)\big]\bigg]\nonumber\displaybreak[0]\\
	=&\,\,\Re\bigg[{\alpha_1\gamma}\int_0^L\big[Pv'(P \bar w')'\big]'\d x+\alpha_1\int_0^L \big[Pv \bar w'\big]'\d
x\label{diss:1}\\
	&+\alpha_1\gamma P(L)w'(L) \bar v'(L)+\alpha_2w(0)\bar v(0)\nonumber\\
	&-\alpha_1P(L)v(L) \bar w'(L)+\alpha_2\gamma v(0)\bar F\nonumber\\
	&+\frac12\big[v(0)-2\alpha_1P(0) w'(0)+2\alpha_2w(0)\big]\big[\bar F-2\alpha_1P(0) \bar v'(0)+2\alpha_2\bar
v(0)\big]\bigg]\nonumber.
\end{align}
Using the boundary conditions in $D(A)$ to evaluate the term $P v'(P \bar w')'|_0^L$, we find that the real parts of all
terms at $x=L$ cancel against the real part of the third term of (\ref{diss:1}). The remaining terms are
\begin{align*}
	\Re\la z,Az\rah &=\Re\Big[\bar v(0)[-\alpha_1P(0) w'(0)+\alpha_2w(0)]+\gamma \bar F[-\alpha_1P(0)
v'(0)+\alpha_2v(0)]+\\
	&+\frac 12\big[v(0)-2\alpha_1P(0) w'(0)+2\alpha_2w(0)\big]\big[\bar F-2\alpha_1P(0) \bar v'(0)+2\alpha_2\bar
v(0)\big]\Big].
\end{align*}
By introducing the functional $J:w\mapsto -2\alpha_1P(0) w'(0)+2\alpha_2w(0)$, we simplify the expression:
\begin{equation}\label{dis:1}
\Re\la z,Az\rah=\frac 12\Re\Big[\bar v(0)J(w)+\gamma \bar FJ(v)+\big[v(0)+J(w)\big]\big[\bar F+J(\bar v)\big]\Big].\end{equation}
Assuming the relations (\ref{cond:theta}) and (\ref{alphas}) we can write $F=-av(0)-bJ(w)-J(v)$ with $a,b>0$. Then the right hand side of (\ref{dis:1}) only depends on the three independent values $v(0),J(w)$ and $J(v)$.
Introducing the new variables $y_1=\sqrt{a}\,v(0),\, y_2=\sqrt{b}\,J(w)$ and $y_3=\sqrt{\gamma}\,J(v)$ yields:
\begin{equation}\label{dis:2}\Re\la z,Az\rah=\frac 12\big(P_3(\Re y_1,\Re y_2,\Re y_3)+P_3(\Im y_1,\Im y_2,\Im y_3)\big),\end{equation}
where $P_3$ is the polynomial defined by
\begin{equation}P_3(x_1,x_2,x_3):=-x_1^2-x_2^2-x_3^2-2x_1x_2\Big(\frac{a+b-1}{2\sqrt{ab}}\Big)-2x_2x_3\Big(\frac{\sqrt{b\gamma}}{2}
\Big)-2x_1x_3\Big(\frac{\sqrt{a\gamma}}{2}\Big).\end{equation}
Hence, $A$ is dissipative if
\begin{equation}\label{polynom}x_1^2+x_2^2+x_3^2+2x_1x_2\Big(\frac{a+b-1}{2\sqrt{ab}}\Big)+2x_2x_3\Big(\frac{\sqrt{b\gamma}}{2}
\Big)+2x_1x_3\Big(\frac{\sqrt{a\gamma}}{2}\Big)\ge 0,\quad \forall x_1,x_2,x_3\in\R.\end{equation}
According to Lemma \ref{lem:A:2} this inequality is satisfied if there holds:
\[a,b\le\frac 4\gamma,\qquad \frac{(a+b-1)^2}{4ab}\le1,\qquad\frac{(a+b-1)^2}{4ab}\le 1-\frac \gamma4.\]
Since $\gamma>0$ has not yet been specified, we can choose $\gamma$ arbitrarily small, so that the above conditions reduce to the single condition
\begin{equation}\label{cond:1}
\frac{(a+b-1)^2}{4ab}<1.
\end{equation}
So, the relation (\ref{cond:theta}) on the $\theta_i$ together with the condition (\ref{cond:1}) on the $a,b>0$ is sufficient for the dissipativity of $A$ in $\H$ with respect to the inner product (\ref{inner_prod}), with the choice (\ref{alphas}) for $\alpha_1$ and $\alpha_2$, and $\gamma>0$ sufficiently small.
\end{proof}

\begin{center}

\end{center}

\bigskip
\centerline{\bf Acknowledgments}
\

The first author was supported by the FWF-project I395-N16 and the FWF doctoral school ``Dissipation
and dispersion in nonlinear partial differential equations''. The second and the third authors were partially supported by the Doctoral School ``Partial differential equations in technical systems: modeling, simulation, and control'' of Technische Universit\"at Wien. The second author acknowledges a sponsorship by \emph{Clear Sky Ventures}.  We are grateful to the anonymous referee who drew our attention to the references \citep{Conrad:Mifdal,Mifdal:1997}.

\bibliographystyle{apacite} % Verwendung von apacite.sty
\bibliography{references}

\end{document}